\theoremstyle{plain}
\newtheorem{theorem}{Theorem}[section]
\newtheorem{proposition}[theorem]{Proposition}
\newtheorem{lemma}[theorem]{Lemma}
\newtheorem*{main}{{\bf Theorem}}
\theoremstyle{definition}
\newtheorem{remark}[theorem]{Remark}
\newcommand{\reffig}[1]{Figure~\ref{#1}}
\newcommand{\reflem}[1]{Lemma~{\rm \ref{#1}}}
\newcommand{\refprop}[1]{Proposition~{\rm \ref{#1}}}
\newcommand{\refthm}[1]{Theorem~{\rm \ref{#1}}}
\newcommand{\refsec}[1]{Section~{\rm \ref{#1}}}
\newcommand{\refsubsec}[1]{Subsection~{\rm \ref{#1}}}
\newcommand{\refsubsubsec}[1]{Subsubsection~{\rm \ref{#1}}}
\newcommand{\bA}{\mathbb{A}}
\newcommand{\bZ}{\mathbb{Z}}
\newcommand{\bF}{\mathbb{F}}
\newcommand{\bP}{\mathbb{P}}
\newcommand{\bC}{\mathbb{C}}
\newcommand{\cO}{\mathscr{O}}
\newcommand{\SL}{\mathrm{SL}}
\newcommand{\Div}{\mathrm{Div}\,}
\newcommand{\Pic}{\mathrm{Pic}\,}
\newcommand{\ord}{\mathrm{ord}\,}
\newcommand{\Tr}{\mathrm{Tr}\,}
\newcommand{\Cf}{\textrm{cf.}\;}
\newcommand{\hr}{\hookrightarrow}
\newcommand{\ra}{\rightarrow}
\newcommand{\dra}{\dashrightarrow}
\newcommand{\lan}{\langle}
\newcommand{\ran}{\rangle}
\newcommand{\fg}{\pi_1}
\newcommand{\isom}{\tilde{\ra}}
\newcommand{\opcit}{\textrm{op.\,cit.}}
\newcommand{\xra}[1]{\xrightarrow[]{#1}}
\newcommand{\vphi}{\varphi}
\renewcommand{\Im}{{\rm Im}}
\newcommand{\ga}{\alpha}
\newcommand{\gb}{\beta}
\newcommand{\geps}{\epsilon}
\newcommand{\ssm}{\smallsetminus}
\newcommand{\sing}{\mathrm{sing}}
\renewcommand{\div}{\mathrm{div}}
\title{Canonical components of character varieties
 of arithmetic two-bridge link complements}
\author{Shinya Harada}
\keywords{ 
$\SL_2(\bC)$-character variety,
 arithmetic $3$-manifold, two-bridge link}
\begin{document}

\pagestyle{myheadings}
\pagenumbering{arabic}

\maketitle

\markboth{Shinya Harada}{Canonical components of arithmetic two-bridge link groups}

\begin{abstract}
 The desingularizations of
 the canonical components of $\SL_2(\bC)$-character varieties of arithmetic two-bridge link groups are determined.
\end{abstract}


\section{Introduction}
\label{Intro}

 The $\SL_2(\bC)$-character variety of a hyperbolic $3$-manifold
 is one of the central topics in the study of hyperbolic geometry.
 However little is known about the algebro-geometric
 properties of the character
 variety of a hyperbolic 3-manifold as an algebraic variety.
 In \cite{Vicente} the structure of
 the $\SL_2(\bC)$-character varieties 
 of torus knot groups were explicitly determined.
 In \cite{MPL}
 Macasieb, Petersen and van Luijk
 studied properties of
 the $\SL_2(\bC)$-character varieties of a 
 certain family of two-bridge knots which contains the twist knots.
 In fact they showed that
 the canonical components of
 the $\SL_2(\bC)$-character varieties of
 the twist knots are hyperelliptic curves.
 In \cite{EmilyLandes} Landes studied the canonical component of
 the Whitehead link complement.
%

 The Whitehead link complement is one of the examples of 
 arithmetic two-bridge links.
 In determining the canonical component of the character variety
 of the Whitehead link complement
 it was crucial that
 it can be considered as a (singular) conic bundle
 over the projective line $\bP^1:=\bP^1_{\bC}$
 in a specific projective space,
 which made it easy to obtain an explicit minimal model of
 the canonical component as an algebraic surface.
 It is already seen in other examples Landes computed
 that the canonical components of hyperbolic two-bridge links
 are not necessarily conic bundles over $\bP^1$ in general.

 It is known 
 (\cite{GMM2}) that
 there are only finitely many arithmetic two-bridge links
 in the $3$-sphere $S^3$.
 In fact, there are only $4$ such links, the figure $8$ knot $4_1=(5/3)$,
 the Whitehead link $5^2_1=(8/3)$, $6^2_2=(10/3)$
 and $6^2_3=(12/5)$ in the Rolfsen's table.
 The canonical component of the character variety of
 the figure 8 knot complement
 is well known, which is an elliptic curve (for instance,
 see \cite{W-Q}, Corollary 4.1).
 In this note we study the canonical components
 of the $\SL_2(\bC)$-character varieties of the other three arithmetic two-bridge links.
 (Unfortunately there was an error
 on the determination
 of a minimal model in the Whitehead link case
 in Landes' paper \cite{EmilyLandes},
 more specifically the proof of Corollary $1$ seems wrong,
 which was crucial for the determination of a minimal model
 in her paper.
 We will also recompute that in this note.
 Note that still the statement of Theorem $1$
 in her paper \cite{EmilyLandes} is true.)
 We can see that those also are (singular) conic bundles over $\bP^1$.
 Hence we can characterize their desingularizations
 by following the same method in \cite{EmilyLandes}.
 The main result in this note is as follows:

\begin{main}
 The desingularizations of
 the canonical components of the $\SL_2(\bC)$-character varieties
 of $5^2_1$, $6^2_2$ and $6^2_3$ are conic bundles over
 the projective line $\bP^1$ which are
 isomorphic to
%
 the surface obtained from $\bP^1\times\bP^1$
 by repeating a one-point blow up 
 $9$, $12$ and $9$ times
 (or equivalently
 obtained from $\bP^2$ by repeating a one-point
 blow up $10$, $13$ and $10$ times), respectively.
\end{main}

%
 Here we explain the outline of this note.
 In \refsec{sec:Natural models}
 we will show the explicit defining equations
 of the natural models
 ($\SL_2(\bC)$-character varieties)
 of the arithmetic two-bridge links
 and study their irreducible components.
 In particular we identify their canonical components.
 In \refsec{sec:ProjModel}
 we describe the singular points of certain projective models
 of the canonical components of the natural models
 which are equipped with the conic bundle
 structure over $\bP^1$.
 We also compute explicitly the degenerate fibers
 of them, which is useful for the determination of minimal
 models of the desingularizations of those projective models.
 In \refsec{sec:MinModel}
 we determine minimal models of the desingularization
 of our projective models
 by employing intersection theory of surfaces.
 In \refsec{sec:Desingularization}
 we characterize the desingularizations
 in terms of the number of blow ups from
 the minimal models obtained in \refsec{sec:MinModel}
 by computing the Euler characteristics of
 the projective models.

 The author is grateful to Kenji Hashimoto
 for his help on the determination of minimal models
 of the rational surfaces appeared as the canonical
 components of the $\SL_2(\bC)$-character varieties
 of the arithmetic two-bridge links in this note.
 He would also like to express his gratitude to the referee
 for his/her comments on the original manuscript,
 which drastically improved the composition of
 this note.




\section{Natural models}
\label{sec:Natural models}

 The $\SL_2(\bC)$-character variety of a manifold
 $M$ is the set of characters of $\SL_2(\bC)$-representations of the fundamental group $\fg(M)$
 of $M$,
 which is known to be an affine algebraic set.
 For basics and applications of $\SL_2(\bC)$-character
 varieties, see Culler and Shalen's original paper \cite{CS}
 or Shalen's survey paper \cite{ShalenHandbook}.
 It is known in general that
 we can compute the defining polynomials of
 $\SL_2(\bC)$-character varieties of finitely generated
 groups explicitly
 from their group presentations (\cite{GM}, Theorem 3.2).
 However in this note we only consider two-bridge link groups.
 In this case we can compute the defining polynomials by
 Riley's method (\cite{RileyNonab}, \S$2$
 or see \cite{EmilyLandes}, \S$4$)
 by which we can compute the defining polynomials
 with less computation.

 Here we only show the result of computation
 of the defining polynomials of the $\SL_2(\bC)$-character varieties for the arithmetic two-bridge
 link groups and show which irreducible component
 is the canonical component
 (that is, the irreducible component containing
 the point corresponding to the holonomy representation).
 We also include the Whitehead link case
 for the convenience of the reader.
 For the detailed way of the computation
 of the defining polynomials,
 see \cite{EmilyLandes}, \S$4$.

\subsection{Preliminary:Notation}

 Here we summarize some basic results on
 group presentations of the fundamental group
 of two-bridge link groups.

 Let $L$ be a two-bridge link in the $3$-sphere.
 Then it is well-known
 (\Cf \cite{Burde}, Chapter $12$, G, E $12.1$) that
%
%
 its fundamental group has the following group presentation:
$$
 \fg(S^3 \ssm L) \, \isom\, \lan \; a, b \mid awAW = 1 \ran,
$$
 where
 $A$ and $B$ mean the inverses $a^{-1}$ and $b^{-1}$, respectively.
 When $L$ is represented by the Schubert's normal form
 $(\ga/\gb)$, the word $w$ is defined by
$$
 w:= b^{\geps_1}a^{\geps_2}b^{\geps_3}\cdots a^{\geps_{\ga-2}}b^{\geps_{\ga-1}},
$$
 where $\geps_i:=(-1)^{\left[\tfrac{i\gb}{\ga}\right]}$.
 Here, for a real number $r$,
 $[r]$ is the maximal integer not greater than $r$.

 The $\SL_2(\bC)$-character variety $X(M)$ of a manifold $M$
 is the set of $\SL_2(\bC)$-characters of $\fg(M)$,
 i.e., $X(M):=\left\{\chi_{\rho}:=\Tr(\rho):\fg(M) \ra \bC \mid \rho:\fg(M) \ra \SL_2(\bC) \right\}$.

 If $M$ is an orientable complete hyperbolic $3$-manifold,
 there is a special irreducible component containing
 the point corresponding to the character of the holonomy
 representation of $M$.
 It is known (\Cf \cite{ShalenHandbook}, Theorem $4.5.1$) that
 the canonical component of the $\SL_2(\bC)$-character
 variety of an $n$-component hyperbolic link complement has
 dimension $n$.
 Especially,
 the canonical component of the $\SL_2(\bC)$-character
 variety of a hyperbolic two-bridge link complement is
 an irreducible affine surface over $\bC$.

 Any $\SL_2(\bC)$-character $\chi$ of
 $\fg(S^3 \ssm L)$
 is determined by the values
 $\chi(a),\chi(b),\chi(ab)$.
 Thus we have a canonical injection
 $X(S^3 \ssm L) \ra \bA^3:=\bC^3$
 defined by
$$
\chi_{\rho} \mapsto (x,y,z):=(\chi_{\rho}(a), \chi_{\rho}(b), \chi_{\rho}(ab)).
$$
\noindent
 Put $q:=x^2+y^2+z^2-xyz-4$.
 It is known (\Cf \cite{BookMaRe}, Lemma 1.2.3) that
 a representation $\fg(S^3 \ssm L) \ra \SL_2(\bC)$
 is reducible
 if and only if
 $q(x,y,z) = 0$.
 In particular the points corresponding to abelian characters
 are contained in the algebraic set $V(q)$.
 Here, for polynomials $f_1,\cdots,f_r \in \bC[x,y,z]$
 let $V(f_1,\cdots,f_r)$ be the set of
 common zeros of $f_1,\cdots,f_r$, i.e.
$$
 V(f_1,\cdots,f_r) := \{ (x,y,z) \in \bA^3 \mid f_i(x,y,z)=0,\; 1\le i \le r \}.
$$
%
%

\subsection{Whitehead link $5^2_1$ case}

 The fundamental group $\fg(S^3 \ssm 5^2_1)$
 of the Whitehead link complement $5^2_1=(8/3)$
 has $w:= baBABab$.
 Then $\SL_2(\bC)$-character variety of $S^3 \ssm 5^2_1$ is defined by
 the following two polynomials
$$
 f_0:= p_0 q, \quad g_0:= p_0 (y-2)(y+2),
$$
 where $p_0 := z^3 - xyz^2 + (x^2 + y^2 -2)z -xy$
 and $q$ are irreducible in $\bC[x,y,z]$
 (for instance,
 if we assume there is a factorization of $p_0$,
 immediately we have a contradiction by comparing degrees
 of monomials on both sides.
 We can show that $q$ is irreducible in the same manner).
 Hence we have the decomposition
$$
 X(S^3 \ssm 5^2_1) = V(p_0) \cup V(q,y-2) \cup V(q,y+2).
$$
 Here $V(q,y-2)$ and $V(q,y+2)$ are affine lines $\bA^1$.
 Now the affine algebraic set $V(p_0)$ defined by $p_0$ is the unique
 $2$-dimensional component of the character variety of $5^2_1$.
 Hence that is the canonical component $X_0(S^3 \ssm 5^2_1)$
 of $5^2_1$.
 The points of the algebraic set defined by the polynomial $q$
 correspond to the reducible representations
 of $\fg(S^3 \ssm 5^2_1)$.
 We summerize that the natural model
 $X(S^3 \ssm 5^2_1)$ consists of three irreducible
 algebraic sets
 $V(p_0), V(q,y-2)$ and $V(q,y+2)$.
 The canonical component $X_0(S^3 \ssm 5^2_1)=V(p_0)$ is
 the unique irreducible algebraic subset
 of $X(S^3 \ssm 5^2_1)$ of dimension $2$.
 The other two components consist of points
 corresponding to the reducible $\SL_2(\bC)$-characters
 of $\fg(S^3 \ssm 5^2_1)$.
%
%
%
%
%
%
%
%
%
%
%
%

\subsection{$6^2_2$ case}

%
%
 The fundamental group of the arithmetic two-bridge link
 $6^2_2=(10/3)$ has $w:= babABAbab$.
%
%
%
%
%
%
%
%
%
%
%
%
%
%
%
%
%
 Then the $\SL_2(\bC)$-character variety of $6^2_2$ is
 defined by the following two polynomials
$$
 f_1:= p_1 q, \quad g_1:= p_1 (y-2)(y+2),
$$
%
%
%
%
 where $p_1:=z^4-xyz^3+(x^2+y^2-3)z^2-xyz+1$.
 Note that $p_1$ is irreducible in $\bC[x,y,z]$
 by the similar argument
 as in the Whitehead link case.
 The $\SL_2(\bC)$-character variety $X(S^3 \ssm 6^2_2)$
 consists of three algebraic sets
$$
 X(S^3 \ssm 6^2_2) = V(p_1) \cup V(q,y-2) \cup V(q, y+2).
$$
 Here $V(q,y-2)$ and $V(q,y+2)$ are affine lines $\bA^1$.
 Now the affine algebraic set $V(p_1)$ defined by $p_1$ is the unique
 $2$-dimensional component of the character variety of $6^2_2$.
 Hence that is the canonical component of $6^2_2$.

 Thus the natural model $X(S^3 \ssm 6^2_2)$
 consists of three irreducible components,
 the canonical component $X_0(S^3 \ssm 6^2_2) = V(p_1)$
 and two components $V(q,y-2)$ and $V(q, y+2)$
 which correspond to
 $\SL_2(\bC)$-reducible characters
 of $\fg(S^3 \ssm 6^2_2)$.

\subsection{$6^2_3$ case}
%
 The fundamental group of the arithmetic two-bridge link
 $6^2_3=(12/5)$ has $w:=baBAbabABab$.
\noindent
 Then $\SL_2(\bC)$-character variety of $6^2_3$ is defined
 by the following two polynomials
$$ f_2:= p_2 q r, \quad
 g_2:= p_2 r (y-2)(y+2),
$$
%
%
\noindent 
 where $p_2:=z^3-xyz^2+(x^2+y^2-1)z-xy$
 and $r:=x^2+y^2+z^2-xyz-3$ are irreducible polynomials
 in $\bC[x,y,z]$
 by the similar argument
 as in the Whitehead link case.
%
%
 Thus we have the decomposition
$$
 X(S^3 \ssm 6^2_3) = V(p_2) \cup V(r) \cup V(q,(y-2)(y+2)).
$$
 Here $V(q,(y-2)(y+2))$ is the union of two affine lines $\bA^1$.
 There are two irreducible components of dimension $2$
 in this case.
 By considering the hyperbolicity equations of $S^3 \ssm 6^2_3$
 and the fact that the images of meridians of holonomy
 representations are parabolic elements,
 we can compute the holonomy representation concretely.
 In fact, the holonomy representation of $S^3 \ssm 6^2_3$
 is defined by
$$
 a \mapsto
\begin{pmatrix} 1 & 1 \\ 0 & 1 \end{pmatrix}, \quad
 b \mapsto
\begin{pmatrix} 1 & 0 \\ \ga & 1 \end{pmatrix},
\text{ where } \ga = \dfrac{-1 + \sqrt{-7}}{2}
$$
 up to conjugation.
 Then the point corresponding to the holonomy representation
 is $(x,y,z) = (2,2,2+\ga)$,
 which is a zero of the polynomial $p_2$.
 Hence the canonical component of $6^2_3$ is the irreducible component $V(p_2)$.
 The other irreducible component of dimension $2$,
 $V(r)$ is a smooth affine cubic surface.
 Moreover, we see that
 its natural homogenization $V_+(R) \subset \bP^3$
 defined by $R:= (x^2+y^2+z^2)w + xyz -3w^3$ is
 a smooth projective cubic surface.
 A cubic surface in $\bP^3$ is a well-studied object.
 It is a Del Pezzo surface of degree $3$,
 which is isomorphic to $\bP^2$ with six points
 blown up
 (or $\bP^1\times\bP^1$ with five points blown up
 since $\bP^2$ with two points blown up is isomorphic to
 $\bP^1\times\bP^1$ with one point blown up,
 \Cf \cite{Hartshorne}, V, Remark 4.7.1).

 Thus the natural model $X(S^3 \ssm 6^2_3)$ consists of
 four irreducible components,
 two of which are
 the canonical component
 $X_0(S^3 \ssm 6^2_3) = V(p_2)$ and
 a smooth affine surface $V(r)$.
 The other two components correspond to
 reducible $\SL_2(\bC)$-characters of $\fg(S^3 \ssm 6^2_3)$.

%
%
%
%
%
%
%
%
%
%

\section{Projective models}
\label{sec:ProjModel}

 Let $p_0$, $p_1$ and $p_2$ be the polynomials
 in the previous section
 which define
 the canonical component of
 the $\SL_2(\bC)$-character variety of
 the Whitehead link, $6^2_2$ and $6^2_3$ link respectively.
 Then the Jacobian criterion shows that
 $V(p_i)$ is a smooth affine surface for any $i$.
 The projective surfaces in $\bP^3$
 obtained by homogenizing $p_i$ naturally
 have infinitely many singularities.
 Thus we consider another compactification,
 namely a compactification in $\bP^2\times\bP^1$
 to obtain a projective surface having less singularities.
 We follow the method introduced in
 \cite{EmilyLandes} and \cite{MPL}.
 After reviewing
 $A_n$-singularities in \refsubsec{subsec:An-singularities}
 we study the homogenizations $S_i$ of $V(p_i)$
 in $\bP^2\times\bP^1$.

\subsection{$A_n$-singularities}
\label{subsec:An-singularities}

 The du Val singularity (or rational double points) is
 one kind of isolated singularity of a complex surface
whose exceptional curve consists of a tree of
 rational smooth curves,
 which is the unique rational singularity
 for hypersurfaces in $\bA^3$.
 It is classified into three types
 ($A$-$D$-$E$ singularities).
 Here we only explain the $A_n$-singularity.
 The $A_n$-singularity is one type of the du Val singularity
 characterized by
 the singular point $(0,0,0)$ of
 the equation $x^2 + y^2 + z^{n+1} = 0$.
 The exceptional curve of $x^2 + y^2 + z^{n+1} = 0$
 at the singular point $(0,0,0)$
 obtained by blowing up some number of times
 consists of $n$ smooth projective irreducible curves
 (isomorphic to $\bP^1$)
 with self-intersection number $-2$,
 which intersects each other transversally described as
 \reffig{fig: exceptional curve A_n}.

\begin{center}
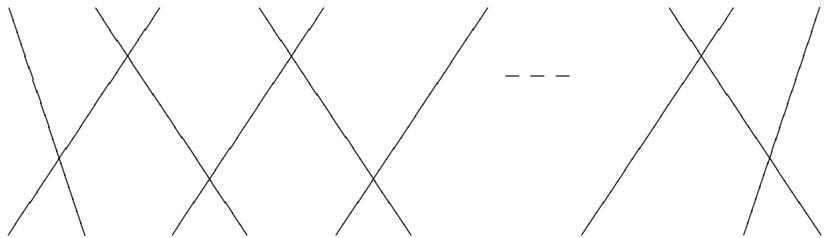
\begin{figure}[h]
\quad\quad 
\xymatrix{
 \ar@{-}[dddr] & \ar@{-}[dddrr] &  & \ar@{-}[dddrr] & & & & & \ar@{-}[dddrr]       &  & \\
  &  &  &  &  & &\ar@{--}[r]& &  &  & \\
  &  &  &  &  & &           & &  &  & \\
 \ar@{-}[uuurr]&  & \ar@{-}[uuurr] &  & \ar@{-}[uuurr] & & & \ar@{-}[uuurr] &  & \ar@{-}[uuur] & 
}
\caption{Exceptional curve of $A_n$-singularity}
\label{fig: exceptional curve A_n}
\end{figure}
\end{center}

\noindent
 Each curve on both sides
 meets only with another curve at one point.
 The other curves meet with two other curves
 transversally.
 It is also expressed by the Dynkin diagram
\begin{figure}[h]
\quad\quad \!\!\!\!\!\!\!\!\!\!\!\!\!
\xymatrix{
 \circ \ar@{-}[r] & \circ \ar@{-}[r] & \circ\ar@{--}[r] & \circ \ar@{-}[r] & \circ
}
\caption{Dynkin diagram of $A_n$-singularity}
\label{fig:Dynkin diagram of $A_n$}
\end{figure}
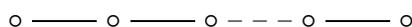

%
%

%
%
%

\noindent
 We have a relation between the topological Euler characteristics
 of a (singular) surface and its desingularization as follows.

\begin{lemma}[\Cf \cite{EmilyLandes}, Prop. 2]
\label{lem:EulerCharSmoothPt}
 Let $S$ be an irreducible smooth projective surface over $\bC$
 and $p$ a point of $S$.
 Let $\tilde{S}$ be the blow up of $S$ at $p$.
 Then $\chi(\tilde{S}) = \chi(S)+ 1$.
\end{lemma}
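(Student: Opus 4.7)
The plan is to exploit two ingredients: the standard geometric fact that the blow-up of a smooth point replaces that point by an exceptional divisor isomorphic to $\bP^1$, and additivity of the topological Euler characteristic under decomposition into locally closed subvarieties.

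First I would set up the geometric picture: the blow-up morphism $\pi \colon \tilde{S} \to S$ has exceptional divisor $E := \pi^{-1}(p) \cong \bP^1$, and $\pi$ restricts to an isomorphism $\tilde{S} \setminus E \isom S \setminus \{p\}$. This gives two stratifications into locally closed pieces,
$$\tilde{S} = (\tilde{S} \setminus E) \sqcup E, \qquad S = (S \setminus \{p\}) \sqcup \{p\},$$
whose open strata are identified under $\pi$. Next I would invoke additivity of $\chi$: for a finite decomposition of a complex variety into locally closed subvarieties $X = \bigsqcup_i X_i$ one has $\chi(X) = \sum_i \chi(X_i)$. Combined with $\chi(\bP^1) = 2$ and $\chi(\{p\}) = 1$, this yields
$$\chi(\tilde{S}) = \chi(\tilde{S} \setminus E) + \chi(E) = \chi(S \setminus \{p\}) + 2 = (\chi(S) - 1) + 2 = \chi(S) + 1.$$

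There is no serious obstacle here; the statement is classical. The only point needing any care is the justification of additivity, which follows, for instance, from the long exact sequence in cohomology with compact supports together with the agreement of $\chi$ and $\chi_c$ for complex algebraic varieties. An alternative, more elementary route would be a direct Mayer--Vietoris calculation using a small open tubular neighborhood $U \subset \tilde{S}$ of $E$ that deformation retracts onto $E$, together with the observation that $U \setminus E$ is diffeomorphic to a punctured neighborhood of $p$ in $S$ (homotopy equivalent to $S^3$) and hence has vanishing Euler characteristic, so the net change in $\chi$ produced by the surgery is $\chi(E) - \chi(\{p\}) = 2 - 1 = 1$.
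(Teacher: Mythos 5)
Your argument is correct and is essentially the approach the paper itself relies on: the paper states this lemma with a citation to Landes and proves the companion $A_n$-singularity lemma by exactly the same decomposition $\tilde{S} = (\tilde{S}\ssm \epsilon^{-1}(p)) \sqcup \epsilon^{-1}(p)$ and additivity of $\chi$, with $\chi(\bP^1)=2$ and $\chi(\{p\})=1$. Your remarks justifying additivity (compactly supported cohomology, or the Mayer--Vietoris alternative) are sound and add nothing problematic.
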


\begin{lemma}
\label{lem:EulerCharAnSingPt}
 Let $S$ be an irreducible projective surface over $\bC$,
 $p \in S$ an $A_n$-singular point
 and let $\epsilon:\tilde{S} \ra S$ be the desingularization
 of $S$ at the point $p$.
 Then we have $\chi(\tilde{S}) = \chi(S) + n$. 
\end{lemma}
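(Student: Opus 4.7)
The plan is to decompose $\tilde{S}$ as the disjoint union of its open locus $\tilde{S} \ssm E$ (where $E := \epsilon^{-1}(p)$ is the exceptional divisor) and the closed subset $E$, and to exploit additivity of the topological Euler characteristic for such stratifications by locally closed subsets. Since by definition $\epsilon$ restricts to an isomorphism $\tilde{S} \ssm E \isom S \ssm \{p\}$, additivity yields
$$
\chi(\tilde{S}) \;=\; \chi(\tilde{S} \ssm E) + \chi(E) \;=\; \chi(S \ssm \{p\}) + \chi(E) \;=\; \chi(S) - 1 + \chi(E).
$$

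It then remains to compute $\chi(E)$. By the description of the minimal resolution of an $A_n$-singularity recalled in \refsubsec{subsec:An-singularities}, $E$ is a chain $C_1 \cup C_2 \cup \cdots \cup C_n$ of smooth rational curves, each $C_i \isom \bP^1$, meeting transversally so that $C_i \cap C_{i+1}$ is a single point for $1 \le i \le n-1$ while all other pairwise intersections and all triple intersections are empty. Inclusion-exclusion then gives
$$
\chi(E) \;=\; \sum_{i=1}^n \chi(C_i) - \sum_{i=1}^{n-1} \chi(C_i \cap C_{i+1}) \;=\; 2n - (n-1) \;=\; n+1,
$$
and combining this with the previous display produces the claimed identity $\chi(\tilde{S}) = \chi(S) + n$.

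The only nontrivial point is the additivity of $\chi$ used above, which is a standard property of the compactly supported Euler characteristic over $\bC$ and coincides with the ordinary topological Euler characteristic on the compact complex varieties considered here; so there is no serious obstacle. An alternative route would be to resolve the $A_n$-singularity by an explicit sequence of $n$ successive blow-ups and apply \reflem{lem:EulerCharSmoothPt} inductively, but the intermediate surfaces in that sequence are themselves singular (with $A_k$-singularities for $k<n$), so \reflem{lem:EulerCharSmoothPt} cannot be invoked directly; this makes the stratification argument above the cleaner option.
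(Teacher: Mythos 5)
Your proposal is correct and follows essentially the same route as the paper: decompose $\tilde{S}$ into $\tilde{S} \ssm \epsilon^{-1}(p)$ and the exceptional fiber, use additivity of $\chi$ together with the isomorphism away from $p$, and compute $\chi(\epsilon^{-1}(p)) = n\chi(\bP^1) - (n-1) = n+1$ for the chain of $n$ rational curves. The only difference is your explicit inclusion--exclusion justification and the closing remark about the alternative inductive resolution, neither of which changes the argument.
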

\begin{proof}
 Note that the fiber of
 $\epsilon:\tilde{S} \ra S$ at the point $p$
 consists of $n$ projective lines
 which intersects with each other
 as in \reffig{fig: exceptional curve A_n}.
 Since $\chi(\bP^1) = 2$ and $\chi(\text{point}) = 1$,
 we have
\begin{align*}
 \chi(\tilde{S}) &= \chi(\tilde{S} \ssm \epsilon^{-1}(p)) + \chi(\epsilon^{-1}(p))
 = \chi(S \ssm \{p\}) + (n \chi(\bP^1) - (n-1)) \\
 &= \chi(S) -1 + n + 1 = \chi(S) + n.
\end{align*}
\end{proof}

\subsection{Projective models of the canonical components}

 Let
$$
\bP^2\times\bP^1:=\{(x:y:u,\;z:w)\mid (x:y:u)\in\bP^2,\;(z:w)\in\bP^1 \}
$$
 be the product of $\bP^2$ and $\bP^1$,
 and let
\begin{align*}
F_0&:=u^2z^3 -xyz^2w + (x^2+y^2-2u^2)zw^2 -xyw^3 \\
F_1&:=u^2z^4 - xyz^3w + (x^2+y^2-3u^2)z^2w^2 -xyzw^3 +u^2w^4 \\
F_2&:=u^2z^3-xyz^2w+(x^2+y^2-u^2)zw^2-xyw^3
\end{align*}
 be the homogenization of $p_0$, $p_1$ and $p_2$
 in $\bP^2\times\bP^1$.
 Consider the algebraic set
$$
S_i:=V(F_i):=\{(x:y:u,\;z:w)\in\bP^2\times\bP^1 \mid F_i(x,y,u,z,w)=0\}
$$
 defined by $F_i$ in $\bP^2\times \bP^1$.
 Since $\bA^3$ is naturally embedded in $\bP^2\times\bP^1$
 as
$$
\{(x:y:1,\;z:1)\mid (x,y)\in\bA^2,\;z\in\bA^1 \},
$$
 $V(p_i)$ is embedded in $S_i$ birationally.

 Let $\phi: \bP^2\times\bP^1 \ra \bP^1$ be
 the projection which is defined by
 $(x:y:u,\;z:w) \mapsto (z:w)$
 and define $\phi_i$ by the restriction of $\phi$ on $S_i$.
 We note that all the fibers of $S_i$ except
 finitely many points are smooth conic in $\bP^2$.
 Hence $\phi_i$ defines a (singular) conic bundle structure
 on $S_i$.
 In the following subsections
 we show the explicit description of the
 singular (degenerate) fibers of $\phi_i$
 and compute the Euler characteristic $\chi(\tilde{S}_i)$
 in terms of $\chi(S_i)$.


\subsection{Whitehead link case}
\label{subsec:Whitehead sec:ProjModel}

 Let $F_0:=u^2z^3 -xyz^2w + (x^2+y^2-2u^2)zw^2 -xyw^3$
 be the homogenization of the polynomial
 $p_0=z^3 - xyz^2 + (x^2 + y^2 -2)z -xy$
 in the projective space $\bP^2 \times \bP^1$.
 Let $S_0:=V(F_0)$ be the algebraic set defined by $F_0$
 in $\bP^2\times\bP^1$.
 It is shown in \cite{EmilyLandes}, \S$4$ that
 $S$ has only four singular points
$$
(1:0:0,\; 1:0), \; (0:1:0,\; 1:0), \; (1:1:0,\; 1:1),\; (1:-1:0,\; 1:-1).
$$
 These four points are $A_1$ singularities
 (we can resolve the singularity by blowing up once)
 and the exceptional curves at the singular points
 are isomorphic to the projective line $\bP^1$.
%
%
%
\noindent
 Thus we have the following relation on the topological
 Euler characteristic of $S_0$ and the desingularization
 $\tilde{S_0}$ by \reflem{lem:EulerCharAnSingPt}:
$$
 \chi(\tilde{S_0}) = \chi(S_0\ssm S_{0,\sing}) + 4\chi(\bP^1)  = \chi(S_0) - 4 + 4\chi(\bP^1)=\chi(S_0) + 4.
$$
\noindent
 Here $S_{0,\sing}$ is the set of singular points of $S_0$.
 In \refsec{sec:Desingularization}
 we compute the topological Euler characteristic $\chi(S_0)$
 and determine $\tilde{S}_0$ in terms of
 the number of one-point blow ups from
 a minimal model of $\tilde{S}_0$.

\noindent
 Note that
 we can consider the surface $S_0$ (hence $\tilde{S}_0$)
 as a (singular) conic bundle over $\bP^1$ by
 the projection $\phi_0: S_0 \ra \bP^1$
 which is defined by
 $(x:y:u,\;z:w) \mapsto (z:w)$.
 It has six degenerate fibers at 
 $(1:0),\,(0:1),\,(1:\pm 1),\,(1:\pm \tfrac{1}{\sqrt{2}})$.
%
%
 In fact, the degenerate fibers of $\phi_0:S_0\ra \bP^1$ are
 expressed as follows:
\begin{align*}
 \phi^{-1}_0(1:0)&\isom\{(x:y:u)\in \bP^2 \mid u^2=0\},\\
 \phi^{-1}_0(0:1)&\isom\{(x:y:u)\in \bP^2 \mid xy=0\},\\
 \phi^{-1}_0(1:\pm\tfrac{1}{\sqrt{2}})&\isom\{(x:y:u)\in \bP^2 \mid
 \tfrac{1}{2}(x \mp \sqrt{2}y)(x \mp \tfrac{1}{\sqrt{2}}y)=0\},\\
\phi^{-1}_0(1:\pm 1)&\isom\{(x:y:u)\in \bP^2 \mid (x\mp y)-u)((x\mp y)+u)=0\}.
\end{align*}
\noindent
 Note that the fiber $\phi^{-1}_0(1:0)$ contains the singular points
 $(1:0:0,\; 1:0)$, $(0:1:0,\; 1:0)$ of the surface $S_0$.
 The fiber $\phi^{-1}_0(1:\pm 1)$
 contains the singular point $(1:\pm 1:0,\;1:\pm 1)$
 of the surface $S_0$ respectively.
%
%

\subsection{$6^2_2$ case}

 Let $F_1:=u^2z^4 - xyz^3w + (x^2+y^2-3u^2)z^2w^2 -xyzw^3 +u^2w^4$
 be the homogenization of $p_1=z^4-xyz^3+(x^2+y^2-3)z^2-xyz+1$
 in $\bP^2\times\bP^1$ with coordinates $x,y,u$ and $z,w$, and
 let
$$
S_1:=V(F_1):=\{(x:y:u,\;z:w)\in\bP^2\times\bP^1 \mid F_1(x,y,u,z,w)=0\}
$$
 be the algebraic set defined by $F_1$
 in $\bP^2\times \bP^1$.
 (This is symmetric on $x$, $y$ and $z$, $w$.)
 This projective surface has only finitely many singular points.
 In fact, its singularities are only the following six points:
\begin{align*}
 (1:0:0,\;1:0),\;\,& (0:1:0,\;1:0),\; (1:0:0,\;0:1),\\
 (0:1:0,\;0:1),\;\,& (1:1:0,\; 1:1),\; (1:-1:0,\;1:-1),
\end{align*}
 which are $A_1$ singularities.
 Especially they are resolved by one blow up at each point.
 Let $\tilde{S}_1 \ra S_1$ be the desingularization of $S_1$
 blown up at these six points.
 Then the exceptional curve at each singular point is
 isomorphic to $\bP^1$.
%
%
\noindent
 Thus we have the following relation on the topological
 Euler characteristic of $S_1$ and $\tilde{S}_1$
 by \reflem{lem:EulerCharAnSingPt}:
$$
 \chi(\tilde{S}_1) = \chi(S_1\ssm S_{1,\sing})+6\chi(\bP^1) = \chi(S_1) - 6 + 6\chi(\bP^1)=\chi(S_1) + 6.
$$
%
%
%
\noindent
 As a (singular) conic bundle over $\bP^1$ by
 the projection $\phi_1: S_1 \ra \bP^1$
 the surface $S_1$ (hence $\tilde{S}_1$)
 has eight degenerate fibers at 
 $(1:0),\,(0:1),\,(1:\pm 1),\,(1:\pm \tfrac{\sqrt{5}\pm 1}{2})$.
%
%
%
 In fact, they are written as follows:
\begin{align*}
 \phi^{-1}_1(1:0)&\isom\{(x:y:u)\in \bP^2 \mid u^2=0\},\\
 \phi^{-1}_1(0:1)&\isom\{(x:y:u)\in \bP^2 \mid u^2=0\},\\
 \phi^{-1}_1(1:\tfrac{\sqrt{5}\pm 1}{2})&\isom\{(x:y:u)\in \bP^2 \mid
 \tfrac{3\pm\sqrt{5}}{2}(x - \tfrac{\sqrt{5}+1}{2}y)(x - \tfrac{\sqrt{5}-1}{2}y)=0\},\\
 \phi^{-1}_1(1:-\tfrac{\sqrt{5}\pm 1}{2})&\isom\{(x:y:u)\in \bP^2 \mid
 \tfrac{3\pm\sqrt{5}}{2}(x + \tfrac{\sqrt{5}+1}{2}y)(x + \tfrac{\sqrt{5}-1}{2}y)=0\},\\
\phi^{-1}_1(1:\pm 1)&\isom\{(x:y:u)\in \bP^2 \mid ((x\mp y)-u)((x\mp y)+u)=0\}.
\end{align*}

\noindent
 We remark that
 the fiber $\phi^{-1}_1(1:0)$
 (resp. $\phi^{-1}_1(0:1)$)
 contains the singular points
 $(1:0:0,\;1:0)$, $(0:1:0,\;1:0)$
 (resp. $(1:0:0,\;0:1)$, $(0:1:0,\;0:1)$) of $S_1$,
 and that each fiber $\phi^{-1}_1(1:\pm 1)$ contains
 the singular point $(1:\pm 1:0,\; 1:\pm 1)$ of $S_1$.
 In the next section
 we compute a minimal model of the desingularization
 of the surface $S_1$.


\subsection{$6^2_3$ case}

 Let $p_2:=z^3-xyz^2+(x^2+y^2-1)z-xy$.
 Let $F_2=u^2z^3-xyz^2w+(x^2+y^2-u^2)zw^2-xyw^3$ be the
 homogenization of $p_2$ in $\bP^2\times\bP^1$
 with the coordinates $(x:y:u,\;z:w)$.
 The corresponding projective surface
 $S_2=V(F_2) \subset \bP^2\times\bP^1$
 has the following four singular points:
$$
 (1:0:0,\;1:0),\; (0:1:0,\;1:0),\; (1:1:0,\;1:1),\; (1:-1:0,\;1:-1).
$$
 We remark that the first two points are $A_1$ singularities
 and the other two points are $A_3$ singularities.
 Hence we can resolve the first two singularities by blowing up once at each point but we have to blow up twice for the latter two points.
 Let $\tilde{S}_2$ be the smooth projective surface obtained
 by blowing up $S_2$ at these four singular points.
 The exceptional curves at the singular points
 $(1:0:0,\;1:0)$ and $(0:1:0,\;1:0)$ are isomorphic
 to $\bP^1$,
 and the exceptional curve at $(1:1:0,\;1:1)$ (resp. $(1:-1:0,\;1:-1)$)
 is the union of three curves $E^{+}_1$, $E^{+}_2$
 and $E^{+}_3$
 (resp. $E^{-}_1$, $E^{-}_2$ and $E^{-}_3$)
 respectively.
%
%
%
\noindent
 Here $E^{\pm}_i$ are smooth projective curves
 isomorphic to $\bP^1$ with self-intersection number $-2$.
 The curve $E^{\pm}_2$ intersects with $E^{\pm}_1$ and $E^{\pm}_3$ at one point respectively
 and $E^{\pm}_1$ and $E^{\pm}_3$ do not intersect each other
 (see \reffig{fig: exceptional curve A_n}).
 Thus we can express the Euler characteristic of $\tilde{S}_2$
 in terms of that of $S_2$ by \reflem{lem:EulerCharAnSingPt}:
$$
 \chi(\tilde{S}_2) = \chi(S_2 \ssm S_{2,\sing}) + 2\chi(\bP^1) + 2
 \chi(E^{+}_1\vee E^{+}_2\vee E^{+}_3)
 = \chi(S_2) - 4 + 4 + 8 = \chi(S_2) + 8.
$$
\noindent
 We can consider the surface $S_2$ (hence $\tilde{S}_2$)
 as a conic bundle over $\bP^1$ by
 the projection $\phi_2: S_2 \ra \bP^1$.
 It has four degenerate fibers at 
 $(1:0),\,(0:1),\,(1:\pm 1)$.
%
\noindent
 In fact, the degenerate fibers of $\phi_2:S_2\ra \bP^1$ are
 expressed as follows:
\begin{align*}
 \phi^{-1}_2(1:0)&\isom\{(x:y:u)\in \bP^2 \mid u^2=0\},\\
 \phi^{-1}_2(0:1)&\isom\{(x:y:u)\in \bP^2 \mid xy=0\},\\
\phi^{-1}_2(1:\pm 1)&\isom\{(x:y:u)\in \bP^2 \mid (x\mp y)^2=0\}.
\end{align*}
\noindent
 We note that
 the fiber $\phi^{-1}_2(1:0)$ contains the singular points
 $(1:0:0,\; 1:0)$, $(0:1:0,\; 1:0)$ of $S_2$.
 The fiber $\phi^{-1}_2(1:\pm 1)$
 contains the singular point $(1:\pm 1:0,\;1:\pm 1)$
 of the surface $S_2$ respectively.

%


\section{Minimal models}
\label{sec:MinModel}

 Since all the three surfaces $S_0$, $S_1$, $S_2$ are
 rational surfaces,
 their minimal models are either the projective plane $\bP^2$
 or the Hirzebruch surfaces $\bF_n$ ($n\ge 0$, $n \neq 1$)
 (\Cf \cite{Beauville}, Theorem V.10).
 Here we compute a minimal model of the surface $S_i$
 for each $i$,
 which is obtained naturally
 from its fibered surface structure.
 The purpose of this section is
 to prove the following two lemmas.

\begin{lemma}
\label{lem:geom.ruled}
 For each $\tilde{S}_i$
 we can blow down $\tilde{S}_i$ over $\bP^1$
 some number of times so that
 it becomes a geometrically ruled surface $T_i$ over $\bP^1$,
 namely all the fibers are isomorphic to $\bP^1$.
\end{lemma}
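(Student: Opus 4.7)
The plan is to exploit the fibration $\tilde{\phi}_i : \tilde{S}_i \ra \bP^1$ obtained by composing the desingularization with $\phi_i$, and to iteratively contract $(-1)$-curves contained in its fibers. Since the generic fiber of $\phi_i$ is a smooth conic in $\bP^2$, hence $\isom \bP^1$, every fiber of $\tilde{\phi}_i$ has arithmetic genus zero, so each reducible fiber is a tree of smooth rational curves. By the standard structure theory for fibered surfaces with generic fiber $\bP^1$ (\Cf \cite{Beauville}, Chapters III, V), it suffices to verify that each reducible fiber contains at least one $(-1)$-curve; successive contractions then terminate at a smooth surface $T_i \ra \bP^1$ all of whose fibers are isomorphic to $\bP^1$, which is by definition a geometrically ruled surface.

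To carry this out I first enumerate the components of each degenerate fiber of $\tilde{\phi}_i$ using the explicit descriptions of the singular fibers of $\phi_i$ given in \refsec{sec:ProjModel}. In every case each degenerate fiber of $\phi_i$ is of one of three shapes: (a) two transverse lines meeting at a smooth point of $S_i$; (b) two transverse lines meeting at a singular point of $S_i$ (an $A_1$ or, in the $6^2_3$ case, an $A_3$ singularity); or (c) a non-reduced double line through one or two singular points. Case (a) lifts unchanged to $\tilde{S}_i$, while in cases (b) and (c) the exceptional $(-2)$-curves of the $A_n$-resolution described in \refsubsec{subsec:An-singularities} are inserted into the fiber, producing a chain of rational curves. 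Self-intersections in $\tilde{S}_i$ are then determined by the two rules that exceptional components are $(-2)$-curves and that the strict transform of a component $C$ passing through $k$ points where blow ups were made has self-intersection $C^2 - k$; combined with the relations $F \cdot C = 0$ for every component $C$ of a fiber $F$ and $F^2 = 0$, this pins down the weighted dual graph of every degenerate fiber.

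The main obstacle will be the non-reduced fibers $\phi_i^{-1}(1:0)$ (and $\phi_1^{-1}(0:1)$), where the original fiber is a double line $\{u^2 = 0\}$. In these cases the full fiber divisor on $\tilde{S}_i$ involves the strict transform $\tilde{L}$ with multiplicity two and the resolution exceptional curves with multiplicity one, so the resulting weighted chain must be analyzed directly to exhibit a $(-1)$-curve at each stage of the contraction process and to confirm that the process can actually be continued until an irreducible fiber is achieved. Once every fiber of the contracted surface has been reduced to a single smooth rational curve, the resulting surface $T_i$ is automatically geometrically ruled, completing the lemma; the number of blow downs performed in each case will then feed into \refsec{sec:Desingularization} as the bookkeeping needed to count blow ups.
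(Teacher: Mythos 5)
Your proposal follows essentially the same route as the paper: work fiber by fiber on $\tilde{\phi}_i:\tilde{S}_i\ra\bP^1$, write each degenerate fiber divisor as strict transforms plus the $(-2)$-curves coming from the $A_n$-resolutions, deduce self-intersection numbers from $F\cdot C=0$ and $F^2=0$ together with the blow-up formulas, and contract $(-1)$-curves via Castelnuovo until every fiber is a single smooth rational curve; this is exactly the paper's argument, carried out there case by case rather than appealing first to the general structure theory of genus-zero fibrations.

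One correction to your classification of fibers: in the $6^2_3$ case the $A_3$ singular points $(1:\pm 1:0,\;1:\pm 1)$ do \emph{not} lie on fibers consisting of two transverse lines, so they do not belong to your case (b). The fibers $\phi_2^{-1}(1:\pm 1)$ are the non-reduced double lines $(x\mp y)^2=0$, hence fall under your case (c); the corresponding fiber divisor on $\tilde{S}_2$ is $2C'+E_1+2E_2+E_3$, where $C'$ is the strict transform of the line $x\mp y=0$ and the $E_j$ are the $(-2)$-curves of the $A_3$-resolution with $E_1\cdot E_2=E_2\cdot E_3=1$, $E_1\cdot E_3=0$. From $\tilde{\phi}_2^{\ast}(1:\pm 1)\cdot C'=0$ one gets $C'^2=-1$, and the contraction sequence (first $C'$, then the middle curve, then one end curve) reduces the fiber to $\bP^1$ exactly as in your case-(c) analysis of the $u^2=0$ fibers. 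With this relabelling your plan goes through and coincides with the paper's proof.
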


\begin{lemma}
\label{lem:determ.ofT_i}
 $T_i$ is isomorphic to $\bP^1\times\bP^1$.
\end{lemma}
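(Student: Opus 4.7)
The plan is to exploit the classification of geometrically ruled surfaces over $\bP^1$. By \reflem{lem:geom.ruled}, $T_i \to \bP^1$ is such a surface, hence isomorphic to a Hirzebruch surface $\bF_n$ for some $n \geq 0$. To force $n = 0$, I will exhibit on $T_i$ an irreducible section of self-intersection $0$; this characterizes $\bF_0 = \bP^1 \times \bP^1$, because on $\bF_n$ with $n \geq 1$ the classes of irreducible sections of the ruling are $C_0$ (of self-intersection $-n$) and $C_0 + bF$ with $b \geq n$ (of self-intersection $-n + 2b \geq n$), none of which equals $0$.

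The candidate sections arise from $S_i \cap \{u = 0\} \subset \bP^2 \times \bP^1$. A direct factorization in each case decomposes this intersection into the degenerate fiber of $\phi_i$ over $(1:0)$ (cut out by $u^2 = 0$) together with horizontal components that project isomorphically onto $\bP^1$ via $\phi_i$. For the Whitehead link,
\[
S_0 \cap \{u = 0\} = V(u, w) \cup V(u, xz - yw) \cup V(u, yz - xw),
\]
and the last two pieces are sections of $\phi_0$, parametrized by $(z:w) \mapsto ((w:z:0),(z:w))$ and $(z:w) \mapsto ((z:w:0),(z:w))$; analogous horizontal components appear in $S_1 \cap \{u = 0\}$ and $S_2 \cap \{u = 0\}$. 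For such a section $\sigma$, one computes its self-intersection on $T_i$ by: (i) determining its class in the Chow ring of $\bP^2 \times \bP^1$; (ii) passing to the strict transform $\tilde\sigma \subset \tilde{S}_i$ and computing $\tilde\sigma^2$ via the adjunction formula on the smooth surface $\tilde{S}_i$ (using that the minimal resolution of $A_n$-singularities is crepant, so $K_{\tilde S_i}$ is the pullback of $(K_{\bP^2 \times \bP^1} + S_i)|_{S_i}$), carefully accounting for how $\sigma$ meets each exceptional chain of $(-2)$-curves; and (iii) tracking through the blow-down $\tilde{S}_i \to T_i$ of \reflem{lem:geom.ruled}, where each contracted $(-1)$-curve met transversally by $\tilde\sigma$ contributes $+1$ to the self-intersection. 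Verifying that the outcome is $0$ completes the identification $T_i \cong \bP^1 \times \bP^1$.

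The main technical difficulty is this intersection-theoretic bookkeeping, most delicate in the $6^2_3$ case: the $A_3$-singularities at $(1:\pm 1:0, 1:\pm 1)$ each contribute a chain of three exceptional $(-2)$-curves in $\tilde{S}_2$, and the degenerate fibers $\phi_2^{-1}(1:\pm 1)$ are double lines $(x \mp y)^2 = 0$; both features require care in selecting which exceptional components to contract to pass from $\tilde{S}_2$ to $T_2$ and in counting the incidences between $\tilde\sigma$ and these contracted curves.
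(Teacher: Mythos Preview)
Your strategy coincides with the paper's: since $T_i$ is some $\bF_n$, it suffices to exhibit an irreducible section of self-intersection $0$, and you correctly locate the candidates inside $S_i \cap \{u=0\}$. The difference lies in how that self-intersection is computed. You propose adjunction on $\tilde S_i$ (via crepancy of the $A_n$-resolution) followed by tracking each contracted $(-1)$-curve, and you rightly flag the $A_3$ chains in the $6^2_3$ case as the delicate part. The paper bypasses this entirely by using \emph{both} sections $\sigma_1=V(u,\,xw-yz)$ and $\sigma_2=V(u,\,xz-yw)$ simultaneously: from the factorizations $F_i = u^2 H_i(z,w) + (\cdots)(xz-yw)(xw-yz)$ one reads off that the strict transforms $\tilde\sigma_1,\tilde\sigma_2$ are linearly equivalent on $\tilde S_i$, while on $S_i$ the curves $\sigma_1,\sigma_2$ meet only at the singular points $(1:\pm1:0,\,1:\pm1)$, so their strict transforms are disjoint. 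Hence $\tilde\sigma_1^2=\tilde\sigma_1\cdot\tilde\sigma_2=0$ with no numerical computation at all, and an inspection of the blow-down diagrams for the fibers over $(1:\pm1)$ shows the images in $T_i$ remain disjoint (linear equivalence being automatically preserved). Your route is sound in principle, but the two-section trick converts the intersection-theoretic bookkeeping you anticipate into a one-line observation that works uniformly for all three links.
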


\noindent
 In \refsubsec{subsec:preliminary on alg surface}
 we review some terminology on algebraic surfaces
 and some basic facts on the intersection theory
 of algebraic surfaces and minimal models.
 In \refsubsec{subsec:proof of first lem}
 and \refsubsec{subsec:proof of second lem}
 we show \reflem{lem:geom.ruled}
 and \reflem{lem:determ.ofT_i}.

%
%
%
%
%
%


\subsection{Preliminary on algebraic surfaces}
\label{subsec:preliminary on alg surface}

\subsubsection{Basic properties of intersection theory of surfaces}
\label{subsubsec:Intersection theory}

 Here we summarize some basic properties
 of the intersection theory of algebraic surfaces.
 In particular we include some results on
 the intersection numbers of divisors
 of fibered surfaces,
 which are necessary for the computation
 of minimal models obtained by
 blowing down the conic bundles over $\bP^1$
 appeared in the previous section.
 For more details, see \cite{Beauville},
 \cite{Hartshorne} or \cite{AdvancedSilverman}, III \S7, 8.

 In \refsubsubsec{subsubsec:Intersection theory}
 and \refsubsubsec{subsubsec:minimal models},
 a curve or a surface always means
 a smooth projective irreducible curve or surface
 unless otherwise mentioned.

 Let $S$ be a surface
 over the field of complex numbers $\bC$.
 Let $\Div(S)$ be the group of all the divisors
 of $S$ and
 $\div: \bC(S)^{\times} \ra \Div(S)$
 the divisor function.
 We say that two divisors $D,D' \in \Div(S)$
 are {\it linearly equivalent}
 if $D' = D + \div(f)$ for some $f \in \bC(S)^{\times}$.
 Then there is a unique symmetric bilinear pairing
$$
 (\;\;,\;\;) : \Div(S)\times \Div(S) \ra \bZ
$$
 which satisfies the following two properties:
\begin{enumerate}
\item For curves $C_1,C_2$ on $S$
 which meet everywhere transversally,
 then $(C_1,C_2)=\# (C_1\cap C_2)$,
 where $\# (C_1\cap C_2)$ is the number of points
 of $C_1 \cap C_2$.

\item If $D,D_1,D_2 \in \Div(S)$ are
 divisors and $D_1$, $D_2$ are linearly equivalent,
 then $(D, D_1) =(D, D_2)$.
 Hence the pairing $(\;\;,\;\;)$ induces
 the pairing
 $\Pic(S)\times\Pic(S)\ra \bZ$
 on the Picard group of $S$
 (the quotient group of $\Div(S)$ by the image of $\div$).
\end{enumerate}
 We will also write $D\cdot D'$ in place of $(D,D')$
 for two divisors $D,D' \in \Div(S)$.
 For any divisor $D \in \Div(S)$
 we call $D^2:=D\cdot D$
 the {\it self-intersection number} of $D$.

 We say that a surface
 $S$ is a {\it fibered surface} over a curve $C$
 if there is a surjective morphism
 $\pi: S \ra C$.
 The fiber $\pi^{-1}(t)$ for a point $t \in C$
 is a smooth projective curve
 for all but finitely many points of $C$.
 For any curve $D$ on a fibered surface $S$
 over $C$
 its image $\pi(D)$ is either a point or $C$.
 A curve $D$ on $S$ is called {\it fibral}
 (or {\it vertical}) if the image is a point,
 and is called {\it horizontal}
 if the image is the curve $C$.
 A divisor $D \in \Div(S)$ is called vertical (horizontal)
 if all the curves appeared as the components of $D$
 are vertical (horizontal).
 The structure morphism $\pi:S \ra C$
 of a fibered surface induces the homomorphism
$$
 \pi^{\ast}: \Div(C)\ra \Div(S)
$$
 defined by
$$
 \sum_{t \in C} n_t [t] \mapsto \sum_{t \in C}n_t \sum_{\Gamma \subset \pi^{-1}(t)} \ord_{\Gamma}(u_t \circ \pi)[\Gamma],
$$
 where $u_t$ is a uniformizer
 of the function field $k(C)$ at $t$
 and $\Gamma$ runs through all the curves in $\pi^{-1}(t)$,
 and $\ord_{\Gamma}(u_t \circ \pi)$ is the order
 of $u_t \circ \pi$ in the function field $k(S)$ of $S$
 by the discrete valuation $\ord_{\Gamma}$ defined by $\Gamma$.
 (For the definition of the inverse image of a divisor
 in general, see, e.g. \cite{Beauville}, I or \cite{Hartshorne}, II.6.)

\begin{proposition}[\Cf \cite{Beauville}, Prop. I. 8]
\label{prop:Prop. I. 8}
\begin{enumerate}
\item
 Let $\pi:S \ra C$ be a fibered surface over a curve $C$.
 If $F$ is a fiber of $\phi$
 (that is, $F = \pi^{\ast}[t]$ for some $t \in C$),
 then $F^2 = 0$.

\item
 Let $S,S'$ be surfaces
 and $g: S'\ra S$ a generically finite morphism
 of degree $d$.
 If $D,D'$ are divisors on $S$,
 then $g^{\ast}D \cdot g^{\ast}D' = d (D\cdot D')$.
\end{enumerate}
\end{proposition}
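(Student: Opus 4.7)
My plan is to prove both parts by reducing to disjoint/transversal configurations via linear equivalence, using the two defining properties of the intersection pairing recalled above.

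For (1), the strategy is to exhibit the fiber $F = \pi^{\ast}[t]$ as linearly equivalent to a divisor whose support is disjoint from that of $F$. First I would choose a rational function $h \in \bC(C)^{\times}$ vanishing at $t$, say with $\div(h) = n[t] + D_0$ where $D_0$ is a divisor on $C$ whose support does not contain $t$. Pulling back, $\pi^{\ast}\div(h) = \div(h\circ \pi)$ in $\Div(S)$, so
\[
 nF + \pi^{\ast}D_0 \;=\; \div(h\circ \pi) \;\sim\; 0,
\]
which gives $nF \sim -\pi^{\ast}D_0$. Since $t \notin \supp D_0$, the supports of $F$ and $\pi^{\ast}D_0$ lie over disjoint points of $C$, so the curves appearing in these two divisors are actually disjoint on $S$. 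By property (1) of the pairing (applied componentwise with empty intersection), $F\cdot \pi^{\ast}D_0 = 0$. Then property (2) gives $nF^2 = F\cdot(nF) = -F\cdot \pi^{\ast}D_0 = 0$, hence $F^2 = 0$.

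For (2), my plan is to use bilinearity together with a moving-lemma argument to reduce to the model situation where $D, D'$ are smooth irreducible curves meeting transversally at finitely many points, none of which lies in the branch locus $B \subset S$ of $g$. Concretely, one replaces $D$ and $D'$ by linearly equivalent divisors in general position with respect to $B$ and to each other; by property (2) of the pairing neither side of the claimed equality is affected, so it suffices to verify the formula in this reduced setting. At each point $p \in D \cap D'$, the map $g$ is \'etale of degree $d$ in a neighborhood, so $g^{\ast}D$ and $g^{\ast}D'$ are smooth divisors on $S'$ meeting transversally at the $d$ points of $g^{-1}(p)$. Summing over all $p \in D\cap D'$ gives
\[
 g^{\ast}D\cdot g^{\ast}D' \;=\; \#\, g^{-1}(D\cap D') \;=\; d\cdot\#(D\cap D') \;=\; d\,(D\cdot D').
\]

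The main obstacle is the moving step in (2): asserting the existence of $D, D'$ linearly equivalent to the originals and simultaneously transversal to each other and disjoint from the branch locus requires either invoking the moving lemma on a surface or, as Beauville does, reducing to divisors of the form $H - H'$ for very ample $H$ and using Bertini. In our setting this is a standard (and well-documented) input, so I would cite it rather than reprove it; the rest of the argument is then a direct combinatorial count of transversal intersection points and presents no serious difficulty.
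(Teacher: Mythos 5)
This proposition is quoted in the paper from \cite{Beauville}, Prop.\ I.8, with no proof supplied, so there is no internal argument to compare against; your write-up is essentially the standard proof from the literature. For (1), your argument is correct and is the same device as the proof of \cite{AdvancedSilverman}, Chapter III, Lemma 8.1 (which the paper also quotes): pull back a function on $C$ vanishing at $t$, note that fibers over distinct points are disjoint so their pairing is $0$ by the transversality property (vacuously), and use invariance under linear equivalence; since $h$ vanishes at $t$ you indeed have $n\ge 1$, so dividing by $n$ is legitimate. For (2), the reduction via very ample divisors and Bertini is exactly Beauville's route and is fine to cite. One refinement you should make explicit: $g$ is only \emph{generically} finite, so the locus to avoid when moving $D$ and $D'$ is not just the branch curve but the whole closed subset over which $g$ fails to be finite \'etale, which includes the finitely many points of $S$ that are images of curves contracted by $g$. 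Once $D\cap D'$ avoids that locus, any contracted components appearing in $g^{\ast}D$ lie over points of $D\smallsetminus D'$ (and symmetrically for $g^{\ast}D'$), hence are disjoint from the support of the other pullback and contribute nothing, and over each $p\in D\cap D'$ the map is finite \'etale of degree $d$ (properness plus finite fibers gives finiteness over that neighborhood), so the transversal count $g^{\ast}D\cdot g^{\ast}D'=d\,\#(D\cap D')$ is justified. With that small completion your argument is correct.
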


\begin{lemma}[\cite{AdvancedSilverman}, Chapter III, Lemma 8.1]
\label{lem:Chapter III, Lemma 8.1}
 Let $\pi: S \ra C$ be a fibered surface
 and $\delta \in \Div(C)$.
 If $D \in \Div(S)$ is a vertical divisor
 then $D \cdot \pi^{\ast}(\delta) = 0$.
\end{lemma}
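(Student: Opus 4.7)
By bilinearity of the intersection pairing in each argument, it suffices to prove $\Gamma \cdot \pi^{\ast}[t] = 0$ when $D = \Gamma$ is a single irreducible vertical curve and $\delta = [t]$ is a single point of $C$. Let $t_0 := \pi(\Gamma) \in C$; this is a single point because $\Gamma$ is vertical. If $t_0 \neq t$, then $\Gamma \subset \pi^{-1}(t_0)$ and the support of $\pi^{\ast}[t]$ lies in $\pi^{-1}(t)$, so the two divisors share no points of $S$; property (1) of the intersection pairing (vacuously) yields $\Gamma \cdot \pi^{\ast}[t] = 0$.

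The substantive case is $t_0 = t$, where $\Gamma$ itself appears as a component of $\pi^{\ast}[t]$ and the supports overlap, so the intersection number is not visible directly from the geometry. The plan is to move $\pi^{\ast}[t]$ within its linear equivalence class on $S$ to a representative disjoint from $\Gamma$, then apply property (2). Concretely, choose $f \in \bC(C)^{\times}$ with $\ord_{t}(f) = 1$; such an $f$ exists because the local ring $\cO_{C,t}$ admits a uniformizer, which extends to a global rational function on the smooth projective curve $C$. Writing $\div_{C}(f) = [t] - E$ with $E \in \Div(C)$ supported away from $t$, one gets $\pi^{\ast}\div_{C}(f) = \div_{S}(f \circ \pi)$, so $\pi^{\ast}[t] - \pi^{\ast}E$ is principal on $S$ and hence $\pi^{\ast}[t]$ is linearly equivalent to $\pi^{\ast}E$. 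But $\pi^{\ast}E$ is supported in fibers over points other than $t$, so it is disjoint from $\Gamma$, giving $\Gamma \cdot \pi^{\ast}E = 0$ by the first case (applied componentwise). Property (2) then delivers $\Gamma \cdot \pi^{\ast}[t] = \Gamma \cdot \pi^{\ast}E = 0$.

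\textbf{Main obstacle.} The only delicate step is the overlapping case $t_0 = t$, where one cannot just count intersection points. The moving argument above is self-contained given properties (1) and (2) recorded in \refsubsubsec{subsubsec:Intersection theory}, and the existence of a rational function on $C$ with prescribed simple zero. Equally clean alternatives exist: the projection formula $\Gamma \cdot \pi^{\ast}\delta = \pi_{\ast}\Gamma \cdot \delta$ combined with $\pi_{\ast}\Gamma = 0$ (since $\Gamma$ contracts to a point on $C$), or the observation that $\pi^{\ast}\cO_{C}([t])|_{\Gamma}$ is the pullback of a line bundle under the constant map $\pi|_{\Gamma}$, hence trivial and of degree zero. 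Any of these routes closes the argument.
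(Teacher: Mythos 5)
Your proof is correct: the reduction by bilinearity to a single irreducible vertical curve $\Gamma$ and a single point $t$, the trivial disjoint case, and the move of $\pi^{\ast}[t]$ to the linearly equivalent divisor $\pi^{\ast}E$ supported in other fibers (via a rational function $f$ on $C$ with $\ord_t(f)=1$ and the compatibility $\pi^{\ast}\div_C(f)=\div_S(f\circ\pi)$, after which property (2) of the pairing finishes the overlapping case) is exactly the standard argument. The paper itself gives no proof of this lemma --- it is imported from \cite{AdvancedSilverman}, Chapter III, Lemma 8.1 --- and your argument coincides with the proof given in that reference, so it matches the intended one.
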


\subsubsection{Minimal models of surfaces}
\label{subsubsec:minimal models}
 
 Here we summarize some basic facts
 on minimal models
 we need in this note.

 Let $S$ be a surface over $\bC$
 and $E$ a curve on $S$.
 A curve $E$ on $S$ is called an {\it exceptional curve}
 if it is obtained as a component of the fiber of a point
 by blowing up a (possibly singular) surface.

\begin{proposition}[\Cf \cite{Beauville}, II, 1]
 Let $S$ be a smooth surface
 and $p$ a point on $S$.
 Let $\geps:\tilde{S} \ra S$ be the blow up morphism
 of $S$ at the point $p$.
 Then $\geps^{-1}(p)$ is an irreducible curve on $S$
 isomorphic to $\bP^1$.
\end{proposition}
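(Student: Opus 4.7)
The plan is to reduce to a local computation and exhibit $\epsilon^{-1}(p)$ explicitly as a projective line. Blowing up is a local operation in the base, so I would first choose an affine open neighborhood $U$ of $p$ in $S$ together with local parameters $x,y \in \cO_{S,p}$ at $p$; these give an \'etale (in fact, formally) isomorphism between a neighborhood of $p$ in $S$ and a neighborhood of the origin in $\bA^2_{\bC}$ sending $p$ to $0$. Since both the blow-up and the exceptional fiber are compatible with such \'etale base change, it is enough to verify the claim for $S = \bA^2$ and $p = 0$.

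Next I would use the standard explicit model. Let $\pi:\bA^2 \times \bP^1 \to \bA^2$ be the first projection, write $(x,y)$ for coordinates on $\bA^2$ and $[u:v]$ for homogeneous coordinates on $\bP^1$, and define
\[
B \;:=\; V(xv - yu) \;\subset\; \bA^2 \times \bP^1.
\]
I would then check that $B$ is smooth and irreducible of dimension $2$, and that the restriction $\epsilon := \pi|_B : B \to \bA^2$ is an isomorphism over $\bA^2 \ssm \{0\}$ (because for $(x,y) \neq (0,0)$ the relation $xv = yu$ determines $[u:v]$ uniquely). This identifies $B$ with the blow-up of $\bA^2$ at the origin by its universal property (separating the two charts where $u \neq 0$ and $v \neq 0$, in which $B$ is literally $\bA^2$).

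Finally, I would read off the fiber: over $(0,0)$ the defining equation $xv - yu = 0$ is automatically satisfied, so
\[
\epsilon^{-1}(0) \;=\; \{0\} \times \bP^1 \;\cong\; \bP^1,
\]
which is clearly irreducible. Transporting back through the \'etale local identification at $p$ yields the same statement for $\tilde S \to S$, completing the proof.

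There is no real obstacle here; the only point requiring mild care is the first step, namely justifying that the blow-up of $S$ at $p$ and its exceptional fiber can be computed in an \'etale (or even formal) neighborhood of $p$. This follows from the universal property of the blow-up (the blow-up of an open subset is the preimage of that open subset in the blow-up), so the reduction to $\bA^2$ is rigorous and the explicit computation above closes the argument.
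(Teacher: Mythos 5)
Your proof is correct and is essentially the standard argument that the paper implicitly relies on by citing Beauville: reduce to the explicit local model $V(xv-yu)\subset \bA^2\times\bP^1$ and read off that the fiber over the origin is $\{0\}\times\bP^1$. The only loose point is describing the map given by local parameters as an ``isomorphism'' of a Zariski neighborhood of $p$ with a neighborhood of the origin in $\bA^2$ --- it is only \'etale (or an isomorphism analytically/formally), so the reduction should be justified by the compatibility of blow-ups with \'etale (flat) base change, or equivalently by identifying the exceptional curve with $\mathrm{Proj}\bigl(\bigoplus_{n\ge 0}\mathfrak{m}_p^n/\mathfrak{m}_p^{n+1}\bigr)\cong\bP(\mathfrak{m}_p/\mathfrak{m}_p^2)\cong\bP^1$, rather than by the openness of blow-ups alone; with that adjustment the argument is complete.
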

\noindent
 We say that a curve $E$ is a {\it $(-1)$-curve}
 if it is isomorphic to $\bP^1$
 and its self-intersection number is $-1$,
 that is, $E^2 = -1$.

\begin{proposition}[\Cf \cite{Beauville}, Lem. II, 2,  Prop. II, 3, (i), (ii)]
\label{prop:Prop. II, 3, (i),(ii)}
 Let $S$ be a surface and $p$ a point on $S$.
 Let $\epsilon:\tilde{S} \ra S$ be the blow up at $p$
 and $E$ the exceptional curve of $p$.

\begin{enumerate}
\item
 There is an isomorphism $\Pic S \oplus \bZ \;\isom\; \Pic \tilde{S}$
 defined by $(D,n) \mapsto \epsilon^{\ast}D + n E$.

\item
 If $D,D' \in \Div S$,
 then $\epsilon^{\ast}D\cdot\epsilon^{\ast}D' = D\cdot D'$,
 $E\cdot \epsilon^{\ast}D = 0$ and $E^2 = -1$.

\item
 If $C$ is an irreducible projective curve on $S$
 which passes through the point $p$ with multiplicity $m$,
 then $\epsilon^{\ast}C = \tilde{C} + m E$
 ($\tilde{C}$ is the strict transform of $C$, namely
 the closure of $\epsilon^{-1}(C\ssm p)$ in $\tilde{S}$).
\end{enumerate}
\end{proposition}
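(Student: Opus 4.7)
The plan is to prove the three items in the order (3), (2), (1), since (3) feeds into (2) and (2) feeds into (1). All three statements are local around $p$, so I would work in an affine chart of $S$ with local coordinates $x,y$ vanishing at $p$, using the standard presentation of the blow up $\tilde{S}$ as glued from two charts with coordinates $(x,v)$ (where $y=xv$) and $(u,y)$ (where $x=uy$); the exceptional curve $E$ is cut out by $x=0$ in the first chart and $y=0$ in the second. For (3), if $C$ is locally defined by $f(x,y)=f_m(x,y)+f_{m+1}(x,y)+\cdots$ with $f_j$ homogeneous of degree $j$ and $f_m\not\equiv 0$, pulling back to the first chart gives $f(x,xv)=x^m\bigl(f_m(1,v)+xf_{m+1}(1,v)+\cdots\bigr)$, and symmetrically in the second chart. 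The factor $x^m$ cuts out $E$ with multiplicity $m$, and the remaining factor cuts out the strict transform $\tilde{C}$, yielding $\epsilon^{\ast}C=\tilde{C}+mE$.

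For (2), the identity $\epsilon^{\ast}D\cdot\epsilon^{\ast}D'=D\cdot D'$ is immediate from \refprop{prop:Prop. I. 8}(2) since $\epsilon$ is birational of generic degree $1$. For $E\cdot\epsilon^{\ast}D=0$, I would move $D$ within its linear equivalence class to a divisor whose support avoids the point $p$; then $\epsilon^{\ast}D$ is disjoint from $E$, so the intersection number vanishes. To obtain $E^2=-1$, take any smooth curve $C$ through $p$ (so $m=1$); by (3) we have $\epsilon^{\ast}C=\tilde{C}+E$, and an inspection in the chart above shows that $\tilde{C}$ meets $E$ transversally at exactly one point, so $\tilde{C}\cdot E=1$. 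Combining this with $\epsilon^{\ast}C\cdot E=0$ (just proved) gives $0=(\tilde{C}+E)\cdot E=1+E^2$, hence $E^2=-1$.

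For (1), I would exploit the localization sequence
\[ \bZ\xrightarrow{n\mapsto nE}\Pic\tilde{S}\lr\Pic(\tilde{S}\ssm E)\lr 0 \]
for the open immersion $\tilde{S}\ssm E\hookrightarrow\tilde{S}$, together with the fact that $\epsilon$ restricts to an isomorphism $\tilde{S}\ssm E\isom S\ssm\{p\}$ and that Weil divisors on a smooth surface extend uniquely across a codimension-$2$ point, so $\Pic(S\ssm\{p\})\cong\Pic S$. Injectivity of $n\mapsto nE$ is forced by (2): if $nE\sim 0$ as divisors, then $0=nE\cdot E=nE^2=-n$, so $n=0$. Splicing these observations produces the claimed splitting $\Pic\tilde{S}\cong\Pic S\oplus\bZ\cdot [E]$ realized by $(D,n)\mapsto\epsilon^{\ast}D+nE$.

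The main obstacle is the self-intersection computation $E^2=-1$, which is the hinge of the whole proposition. One cannot pair $E$ with itself directly, so the trick is to pair $E$ with the non-transversal divisor $\epsilon^{\ast}C=\tilde{C}+E$ produced by (3) and then exploit the vanishing $\epsilon^{\ast}D\cdot E=0$, which itself rests on the moving lemma applied to $D$. This forces the logical order (3)$\Rightarrow$(2)$\Rightarrow$(1); once $E^2=-1$ is secured, the remaining pieces fall into place by standard Picard-group and intersection-theoretic bookkeeping.
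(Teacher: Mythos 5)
The paper offers no proof of this proposition at all---it is quoted as background directly from Beauville (Lem.\ II.2 and Prop.\ II.3 of \cite{Beauville})---and your argument is correct and is essentially the standard proof given in that cited source: the two-chart computation $f(x,xv)=x^{m}(f_m(1,v)+\cdots)$ for (3), then moving $D$ off $p$ to get $E\cdot\epsilon^{\ast}D=0$ and pairing $E$ against $\epsilon^{\ast}C=\tilde{C}+E$ for a curve smooth at $p$ to get $E^{2}=-1$, and finally the localization/splitting argument for the Picard group in (1). The only points left implicit (existence of a curve smooth at $p$ through $p$, the moving lemma, and that injectivity of $\epsilon^{\ast}$ follows because restriction to $\tilde{S}\ssm E\cong S\ssm\{p\}$ is a one-sided inverse) are standard and do not affect correctness.
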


\begin{theorem}[Castelnuovo's contractibility Theorem, \Cf \cite{Beauville}, II, Theorem 17)]
\label{thm:Castelnuovo}
 Let $S$ be a surface over $\bC$
 and $E$ a $(-1)$-curve on $S$.
 Then $E$ is an exceptional curve on $S$,
 namely there is a surface $S'$ and a morphism
 $\epsilon:S \ra S'$ such that $\epsilon$
 is the blow up of $S'$ at a point $p$
 with $E = \epsilon^{-1}(p)$.
\end{theorem}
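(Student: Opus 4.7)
The plan is to construct the contraction morphism $\epsilon$ explicitly as the morphism attached to a carefully chosen linear system on $S$. The target $S'$ will be the image in projective space, the essential geometric work being to verify that the image is smooth at the contracted point. I would follow the approach of Beauville (\cite{Beauville}, II).

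First I would fix a very ample divisor $H$ on $S$ and, by replacing $H$ with a sufficiently large multiple, arrange that $H^1(S, \mathcal{O}_S(H)) = 0$. Set $k := H \cdot E$, which is positive because $H$ is very ample and $E$ is effective. The candidate divisor is $H' := H + kE$; bilinearity of the intersection pairing and $E^2 = -1$ give $H' \cdot E = k + k E^2 = 0$.

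Next I would prove by induction on $i$ that $H^1(S, \mathcal{O}_S(H + iE)) = 0$ for $0 \le i \le k$, using the short exact sequence
$$
0 \ra \mathcal{O}_S(H + (i-1)E) \ra \mathcal{O}_S(H + iE) \ra \mathcal{O}_E((H + iE)|_E) \ra 0.
$$
The key point is that $E \cong \bP^1$ and $(H + iE)|_E$ has degree $k - i \ge 0$, so its $H^1$ vanishes. The same exact sequence at $i = k$ yields a surjection $H^0(S, \mathcal{O}_S(H')) \thr H^0(E, \mathcal{O}_E) = \bC$, so every section of $|H'|$ restricts to a constant on $E$. Off $E$, the linear system $|H'|$ is base-point free because multiplying by the canonical section of $\mathcal{O}_S(kE)$ sends sections of the very ample $H$ into $|H'|$; on $E$, the surjectivity just noted supplies non-vanishing sections. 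Thus $|H'|$ defines an everywhere-defined morphism $\phi: S \ra \bP^N$ that contracts $E$ to a single point $p$ and embeds $S \ssm E$ into its image $S' := \phi(S)$ (separation of points and tangent vectors off $E$ follows from the very ampleness of $H$ by the same multiplication trick).

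The main obstacle, and the heart of the proof, is to establish that $S'$ is smooth at $p$. One analyzes the formal completion $\hat{\mathcal{O}}_{S',p}$ through its associated graded ring, whose degree-$n$ piece identifies with $H^0(E, \mathcal{O}_E(-nE|_E))$. Because the conormal bundle $\mathcal{O}_E(-E|_E) \cong \mathcal{O}_{\bP^1}(1)$ is very ample on $E$, the natural symmetric-power maps
$$
\mathrm{Sym}^n H^0(E, \mathcal{O}_E(-E|_E)) \ra H^0(E, \mathcal{O}_E(-nE|_E))
$$
are surjective, which forces $\hat{\mathcal{O}}_{S',p} \cong \bC[[u,v]]$ and hence smoothness of $S'$ at $p$. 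Granted this, the universal property of the blow up identifies $\phi: S \ra S'$ with the blow up of $S'$ at $p$ having $E$ as its exceptional curve, completing the proof.
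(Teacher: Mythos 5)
The paper does not prove this statement: it is quoted as a classical result with a citation to Beauville (II, Theorem 17), so there is no internal proof to compare against. Your outline is precisely the standard argument from that reference (and Hartshorne V.5.7): the choice $H' = H + kE$ with $k = H\cdot E$, the inductive $H^1$-vanishing via restriction to $E \cong \bP^1$, the resulting morphism contracting $E$ and embedding $S \ssm E$, and smoothness of the image at $p$ via the completed local ring, and it is essentially correct. The only places you gloss are standard: identifying the graded pieces of $\hat{\mathcal{O}}_{S',p}$ with $H^0(E,\mathcal{O}_E(-nE|_E))$ really rests on the theorem on formal functions applied to the thickenings of $E$, and the final step needs either the universal property (checking that the pullback of $\mathfrak{m}_p$ is the invertible ideal of $E$) or the structure theory of birational morphisms between smooth surfaces to conclude that $\phi$ is exactly the blow up of $S'$ at $p$ with exceptional curve $E$.
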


 A surface $S$ is called {\it relatively minimal}
 if there is no $(-1)$-curves on $S$.
 There are only finitely many $(-1)$-curves on
 a surface.
 Therefore, for a surface $S$
 we can find a sequence of surfaces
$$
 S \ra S_1 \ra S_2 \ra \cdots \ra S_n
$$
 such that $S_n$ is relatively minimal.
 Such a surface $S_n$ is called a
 {\it relatively minimal model}
 (usually called a minimal model) of $S$.
 Note that a minimal model
 is not necessarily unique for a given surface.

 A {\it rational surface} is an irreducible smooth
 projective surface which is birational to $\bP^2$.
 It is known that,
 for any surface $S$ except rational surfaces,
 there is a unique relatively minimal model
 (the {\it minimal model} of $S$).
 For rational surfaces, the classification of relatively
 minimal models is known.
 Namely, there are two types of minimal
 models: the projective plane $\bP^2$ and 
 the Hirzebruch surfaces $\bF_n$ ($n\ge 0$, $n \neq 1$)
 (\Cf \cite{Beauville}, Theorem V.10).
 Here a {\it Hirzebruch surface} $\bF_n$ is
 a $\bP^1$-bundle over $\bP^1$ associated with the sheaf
 $\cO_{\bP^1}\oplus\cO_{\bP^1}(-n)$
 for $n\ge 0$,
 where $\cO_{\bP^1}$ is the structure sheaf of $\bP^1$
 and $\cO_{\bP^1}(-n)$ is the inverse of
 the $n$ tensor product of
 the Serre twisting sheaf $\cO_{\bP^1}(1)$.

 A {\it geometrically ruled surface} $S$ over
 a curve $C$ is an irreducible smooth
 projective surface together with a smooth morphism
 $\pi: S \ra C$ such that all the fibers are
 isomorphic to $C$.
 Especially geometrically ruled surfaces over $\bP^1$
 are only Hirzebruch surfaces $\bF_n$
 (see \cite{Beauville}, IV).
 The surfaces $\bF_n$ are relatively minimal for any $n \ge 0$ except $1$,
 and $\bF_1$ is isomorphic to $\bP^2$ blown up at one point.

\subsection{Proof of \reflem{lem:geom.ruled}}
\label{subsec:proof of first lem}

 To show \reflem{lem:geom.ruled},
 here we compute the surface $T_i$ obtained by
 blowing down all the $(-1)$-curves on $\tilde{S}_i$
 appeared as the components of fibers of the morphism
 $\tilde{S}_i \ra S_i \xra{\phi_i} \bP^1$.
 Then we see that $T_i$ is a geometrically ruled surface
 over $\bP^1$.
 In \refsubsec{subsec:proof of second lem}
 we show that $T_i$ is a minimal model of $\tilde{S}_i$
 for each $i$.


\subsubsection{Whitehead link case}

 As we have seen in \refsec{sec:ProjModel},
 $\phi_0:S_0 \ra \bP^1$ has six degenerate fibers.
 Thus, the composite morphism of $\phi_0$ and
 the blow-up morphism
 $\tilde{\phi}_0:\tilde{S}_0 \ra S_0 \xra{\phi_0} \bP^1$
 also has six degenerate fibers.
 Since the other fibers are conics in $\bP^2$
 they are isomorphic to $\bP^1$.
 Here we show that
 we can blow down $\tilde{S}_0$ over $\bP^1$
 some number of times
 so that it becomes
 a geometrically ruled surface over $\bP^1$.
%

 First consider the $\phi^{-1}_0(0:1)$ case.
 The fiber $\phi^{-1}_0(0:1)$
 consists of two $\bP^1$ which intersects
 each other transversally.
 There is no singular points of $S_0$ on this fiber.
 Hence it is isomorphic to
 $\tilde{\phi}^{-1}_0(0:1)$.
%
%
%
%
 Now we show that
 each curve $C_i$ in the fiber
 has self-intersection number $-1$.
%
\noindent
 By \reflem{lem:Chapter III, Lemma 8.1},
 $(C_1+C_2)\cdot C_i = 0$.
 Since $(C_1+C_2)\cdot C_i = C^2_i + 1$,
 we have $C^2_i = -1$.
 Thus we can blow down one of these two $(-1)$-curves
 in the fiber
 by \refthm{thm:Castelnuovo},
 and the fiber becomes a curve $C \;\!\isom\;\!\bP^1$
 with self-intersection number $0$
 by \refprop{prop:Prop. II, 3, (i),(ii)} $(2),(3)$.
\begin{figure}[h]
\begin{center}
\quad\quad
\xymatrix{
 \ar@{-}[ddrr] & & C_1  &         &            &  & \\
               & &      & \ar[r]^{\epsilon}  & \ar@{-}[rr] & & C\\
 \ar@{-}[uurr] & & C_2  &         &            &  &
}
\end{center}
\end{figure}
 We can work on the two fibers $\phi^{-1}_0(1:\pm\tfrac{1}{\sqrt{2}})$
 completely in the same way.

%
%

 Next we consider the case
 $\phi^{-1}_0(1:0)$.
 As we have seen in
 \refsubsec{subsec:Whitehead sec:ProjModel},
 the fiber $\phi^{-1}_0(1:0)$ is a double $\bP^1$ line $C$
 which contains two $A_1$ singular points of $S_0$.
 Then the divisor $\tilde{\phi}^{\ast}_0(1:0)$
 in $\Div \tilde{S}_0$ is written as
 $2C' + E_1 + E_2$,
 where $\tilde{\phi}_0:\tilde{S}_0\ra S_0 \xra{\phi_0} \bP^1$,
 the curve $C'$ is the strict transform of $C$ in $\tilde{S}_0$
 and $E_i$ are the exceptional curves which are isomorphic to $\bP^1$
 with $E_i^2=-2$ and $E_i\cdot C'=1$.
 Since $\tilde{\phi}^{\ast}_0(1:0)\cdot C'=0$,
 we have $C'^2=-1$.
 Hence we can blow down at $C'$ and
 the fiber becomes $E'_1 + E'_2$ with $E'^2 = -1$
 by \refprop{prop:Prop. II, 3, (i),(ii)} $(2),(3)$.
 Thus we can blow down again and
 the fiber becomes $\bP^1$.

\begin{figure}[h]
\begin{center}
\quad\quad
\xymatrix{
 & E_1\ar@{-}[dd]  & E_2\ar@{-}[dd]&         & E'_1\ar@{-}[ddrr] & & & &            & & & \\
 C'\ar@{=}[rrr] & &     & \ar[r]^{\epsilon'}  & & & \ar[r]^{\epsilon}&  \ar@{-}[rr] & & \\
              & &           &                & E'_2\ar@{-}[uurr] & & & &          & & &
}
\end{center}
\end{figure}

 Finally we consider the case
 $\phi^{-1}_0(1: 1)$.
 We only have to consider the case $\phi^{-1}_0(1: 1)$
 (we can work on $\phi^{-1}_0(1: -1)$ similarly).
 It consists of two rational curves $C_1$ and $C_2$ which
 intersects each other
 transversally at one point $(1:1:0,\;1:1)$,
 which is an $A_1$ singular point of $S_0$.
 Hence the divisor $\tilde{\phi}^{\ast}_0(1:1)$ is $C'_1+C'_2+E$,
 where $C'_i$ is the strict transform of $C_i$ on $\tilde{S}_0$
 and $E$ is the exceptional curve obtained by resolving
 the singularity at $(1:1:0,\;1:1)$.
\begin{figure}[h]
\begin{center}
\quad\quad
\xymatrix{
 & C'_1\ar@{-}[dd]  & C'_2\ar@{-}[dd]&         & C'_2\ar@{-}[ddrr] & & & &            & & & \\
 E\ar@{-}[rrr] & &     & \ar[r]^{\epsilon'}  & & & \ar[r]^{\epsilon}&  \ar@{-}[rr] & &E'' \\
              & &           &                & E'\ar@{-}[uurr] & & & &          & & &
}
\end{center}
\end{figure}
\noindent
 The exceptional curve $E$ intersects
 with $C'_1$, $C'_2$ transversally
 at one point respectively
 and $C'_1$ and $C'_2$ do not meet at any point.
 Hence $E\cdot C'_i=1$ and $C'_1 \cdot C'_2=0$.
 Since $E^2=-2$,
 we have $C'^2_i=-1$.
 Thus we can blow down at $C'_1$ and $C'_2$
 and obtain one rational curve $E''$ with self-intersection number $0$.

 Therefore the surface $T_0$ obtained by
 blowing down
 all the degenerate fibers of $\tilde{S}_0$ over $\bP^1$
 is a geometrically
 ruled surface over $\bP^1$.


\subsubsection{$6^2_2$ case}

 As we have seen in \refsec{sec:ProjModel},
 $\phi_1:S_1 \ra \bP^1$ has eight degenerate fibers.
 Thus the composite morphism
 $\tilde{\phi}_1:\tilde{S}_1 \ra S_1 \xra{\phi_1} \bP^1$ also has eight degenerate fibers.
 Here we show that the surface $T_1$ obtained by
 blowing down all the degenerate fibers of $\tilde{S}_1$
 is also a geometrically ruled surface.

 First note that, the four fibers
 $\phi^{-1}_1(1:\tfrac{\sqrt{5}\pm 1}{2})$ and
 $\phi^{-1}_1(1:-\tfrac{\sqrt{5}\pm 1}{2})$
 are the unions of two $\bP^1$s intersecting each other
 transversally at one point,
 and they do not contain any singular point of the surface $S_1$.
%
 Hence the situation is the same as the $\phi^{-1}_0(0:1)$ case.
 By the same manner as in the $\phi^{-1}_0(0:1)$ case,
 we can show that
 each curve in the four fibers have self-intersection number
 $-1$ and can be blown down
 to obtain one $\bP^1$.

 The fibers $\phi^{-1}_1(1:0)$ and $\phi^{-1}_1(1:\pm 1)$
 have the same shape and singular points
 as $\phi^{-1}_0(1:0)$ and $\phi^{-1}_0(1:\pm 1)$
 which appeared in the Whitehead link case.
 Therefore we see that
 the fibers $\tilde{\phi}^{-1}_1(1:0)$ and
 $\tilde{\phi}^{-1}_1(1:\pm 1)$ can be blown down
 and we have $\bP^1$.
 Note that
 the same argument as $\phi^{-1}_1(1:0)$
 also works for $\phi^{-1}_1(0:1)$
 since they are symmetric.
%
%

 Hence the surface $T_1$ obtained by
 blowing down $\tilde{S}_1$ over $\bP^1$
 is a geometrically
 ruled surface over $\bP^1$.

\subsubsection{$6^2_3$ case}

 As we have seen in the previous section,
 $\phi_2:S_2 \ra \bP^1$ has four degenerate fibers.
 We can work on the fiber
 $\phi^{-1}_2(1:0)$ (resp. $\phi^{-1}_2(0:1)$)
 in the same way
 as in the $\phi^{-1}_0(1:0)$ (resp. $\phi^{-1}_0(0:1)$) case.

 Now we consider the $\phi^{-1}_2(1: 1)$
 (resp. $\phi^{-1}_2(1:-1)$) case.
 Since the point $(1:1:0,\;1:1)$ (resp. $(1:-1:0,\;1:-1)$)
 is an $A_3$ singular point,
 the exceptional curve consists of
 three rational curves $E_1$, $E_2$ and $E_3$
 with $E^2_i=-2$,
 $E_1\cdot E_2= E_3\cdot E_2=1$ and $E_1\cdot E_3=0$.
 Then the divisor $\tilde{\phi}^{\ast}_2(1:1)$
 (resp. $\tilde{\phi}^{\ast}_2(1:-1)$)
 is $2C'+E_1+2E_2+E_3$
 (see
 \reffig{fig:Blow down of (1:pm 1)}),
 where
 $C'$ is the strict transform on $\tilde{S}_2$
 of the curve on $S_2$ defined by $x- y=0$ (resp. $x+y=0$).

 Since $\tilde{\phi}^{\ast}_2(1:1)^2 = 0$
 (resp. $\tilde{\phi}^{\ast}_2(1:\pm 1)^2 = 0$),
 we have $C'^2=-1$.
 Hence we can also blow down this fiber for $C'$,
 and the divisor of fiber on the blow down is
 $E_1 + 2 E'_2 + E_3$ with $E'^2_2 = -1$ and
 $E_1\cdot E'_2 = E_3\cdot E'_2 = 1$.
 Now the situation is the same as
 the $\tilde{\phi}^{\ast}_0(1:0)$ case.
 We can blow down the fiber twice to have one $\bP^1$.
 Thus we obtain a geometrically ruled surface $T_2$
 over $\bP^1$ by blowing down $\tilde{S}_2$ repeatedly.

%
%

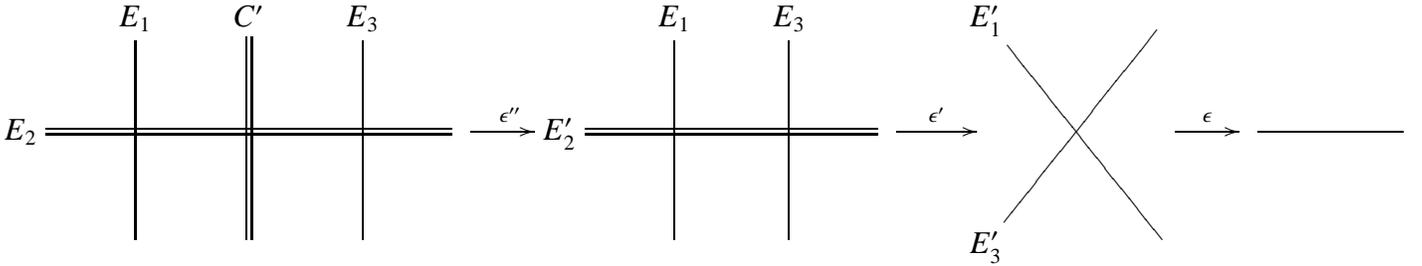
\begin{figure}[h]
\centerline{
\hspace{20mm}
\xymatrix{
 & E_1\ar@{-}[dd] & C'\ar@{=}[dd] & E_3\ar@{-}[dd]&         &
 & E_1\ar@{-}[dd] & E_3\ar@{-}[dd] & &
 E'_1\ar@{-}[ddrr] & & & &            & & & \\
 E_2\ar@{=}[rrrr] & & &     & \ar[r]^{\epsilon''}  & 
 E'_2\ar@{=}[rrr] & & & \ar[r]^{\epsilon'} & 
& & \ar[r]^{\epsilon}&  \ar@{-}[rr] & & \\
              & & &          &                &
 & & & &
 E'_3\ar@{-}[uurr] & & & &          & & &
}
}
\caption{Blow down of $\tilde{\phi}^{-1}_2(1:\pm 1)$}
\label{fig:Blow down of (1:pm 1)}
\end{figure}

\subsection{Proof of \reflem{lem:determ.ofT_i}}
\label{subsec:proof of second lem}
%
%

 In \refsubsec{subsec:proof of first lem}
 we have shown that the surfaces $T_i$
 obtained by blowing down the degenerate fibers
 of $\tilde{S}_i$ are geometrically ruled surfaces.
 Since Hirzebruch surfaces are the only rational surfaces
 which are geometrically ruled surfaces,
 each $T_i$ is isomorphic to
 a Hirzebruch surface $\bF_n$ for some $n\ge 0$.
 It remains to determine the number $n$.
 In fact we show that $T_i$ are
 the Hirzebruch surface $\bF_0 = \bP^1\times\bP^1$.
 Therefore $T_i$ are minimal models of $\tilde{S}_i$.
 Here we review a proposition on the Hirzebruch surfaces.

\begin{proposition}[\Cf \cite{Beauville}, Prop. IV.1]
\label{prop:Hirzebruch surface}
 Let $\bF_n$ be a Hirzebruch surface ($n \ge 0$).
 Let $f \in \Pic \bF_n$ be the element defined by a fiber of $\bF_n$ over $\bP^1$
 and let $h \in \Pic \bF_n$ be the element corresponding
 to the tautological line bundle
 (that is, the invertible sheaf $\cO_{\bF_n}(-1)$).
\begin{enumerate}
\item $\Pic \bF_n = \bZ h \oplus \bZ f$ with $f^2=0$ and $h^2 = n$.
\item When $n > 0$,
 there exists a unique irreducible projective curve $B$ on $\bF_n$
 with $b = [B]^2 < 0$.
 $b$ is written as $b = h - nf$.
 Thus $b^2 = - n$.
\item If $n \neq m$,
 two surfaces $\bF_n$ and $\bF_m$ are not isomorphic.
 $\bF_n$ is a minimal model except if $n = 1$.
 $\bF_0$ is $\bP^1\times\bP^1$ and
 $\bF_1$ is isomorphic to $\bP^2$ with one point blown up.
\end{enumerate}
\end{proposition}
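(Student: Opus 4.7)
The plan is to exploit the description of $\bF_n$ as the projective bundle $\pi:\bP(\cO_{\bP^1}\oplus\cO_{\bP^1}(-n))\to\bP^1$, from which $f$ arises as a fiber of $\pi$ and $h$ as the class of the tautological line bundle. Parts (1)--(3) are then proved largely by intersection-theoretic bookkeeping on this bundle structure.

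For part (1), I would invoke the standard structure theorem for the Picard group of a projective bundle over a smooth variety: the split exact sequence
\[
0 \to \pi^*\Pic\bP^1 \to \Pic\bF_n \to \bZ \to 0
\]
identifies $\Pic\bF_n = \bZ h\oplus \bZ f$, where $f$ is the fiber class and $h = [\cO_{\bF_n}(-1)]$. The relation $f^2=0$ is immediate from \refprop{prop:Prop. I. 8}(1), or directly from the fact that two distinct fibers of $\pi$ are disjoint and linearly equivalent. The identity $h^2=n$ follows from the Grothendieck relation in the Chow ring of $\bP(E)$ for $E=\cO\oplus\cO(-n)$: since $c_1(E)$ has degree $-n$ on $\bP^1$ and $c_2(E)=0$, a short computation (respecting the sign convention that makes $h$ correspond to $\cO(-1)$) yields $h^2=n$, together with $h\cdot f=1$ because the tautological bundle restricts on each fiber to $\cO_{\bP^1}(-1)$.

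For part (2), existence of $B$ comes from the canonical surjection $\cO\oplus\cO(-n)\twoheadrightarrow\cO$, which determines a section of $\pi$ with irreducible image $B$. Writing $[B]=ah+bf$ and intersecting with a fiber $f$ forces $a=1$; matching with the standard class computation on $\bP(E)$ gives $b=-n$, so $[B]=h-nf$ and $B^2 = h^2 - 2n(h\cdot f)+n^2f^2 = n-2n = -n$ by part (1). For uniqueness, any irreducible curve $C\subset\bF_n$ is numerically $[C]=ah+bf$ with $a=C\cdot f\geq 0$. If $C$ is not a fiber and $C\neq B$, then $C\cdot B\geq 0$ gives $b\geq -na$, so
\[
C^2 = na^2 + 2ab \geq na^2 - 2na \cdot a = -na^2 + \text{nonneg},
\]
and a routine case check (using $a\geq 1$) shows $C^2\geq 0$, contradicting $C^2<0$. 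Hence $B$ is the unique curve of negative self-intersection.

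For part (3), the invariant $n=-b^2$ recovered from the unique negative curve distinguishes $\bF_n$ from $\bF_m$ for $n,m\geq 1$ with $n\neq m$, while $\bF_0$ is singled out by having no curve of negative self-intersection (all $ah+bf$ with $a\geq 0$ have $C^2\geq 0$ when $h^2=0$). The identification $\bF_0=\bP(\cO_{\bP^1}^{\oplus 2})=\bP^1\times\bP^1$ is immediate from triviality of the bundle. For minimality: when $n\neq 1$, part (2) shows the only candidate for a $(-1)$-curve is $B$, whose self-intersection is $-n\neq -1$, so none exists; when $n=1$, contracting $B$ by Castelnuovo's \refthm{thm:Castelnuovo} yields a smooth projective surface $S$ of Picard rank $1$, which is rational and minimal, and hence isomorphic to $\bP^2$ by the classification of minimal rational surfaces. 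The main obstacle will be pinning down sign conventions in the Grothendieck relation so as to obtain $h^2=+n$, and tightening the numerical case analysis in part (2); the final identification $\bF_1\smallsetminus B\cong \bP^2\smallsetminus\{\text{pt}\}$ requires a short auxiliary argument, but is standard.
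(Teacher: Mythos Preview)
The paper does not prove this proposition at all; it is simply quoted from Beauville (Prop.~IV.1) and used as a black box in the proof of \reflem{lem:determ.ofT_i}. So there is no ``paper's own proof'' to compare against.

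Your sketch is the standard argument and is essentially correct. One small arithmetic slip in part~(2): with $h^2=n$, $h\cdot f=1$, $f^2=0$ and $[B]=h-nf$, one computes
\[
C\cdot B=(ah+bf)\cdot(h-nf)=an-an+b=b,
\]
so $C\cdot B\ge 0$ gives $b\ge 0$ outright, not merely $b\ge -na$. With $a\ge 1$ and $b\ge 0$ you get $C^2=na^2+2ab\ge na^2>0$ immediately, and the ``routine case check'' you allude to is unnecessary. Apart from this and the sign-convention bookkeeping you already flag, the outline is sound.
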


\noindent
 Note that we can take two sections $s_k:\bP^1 \ra S_i$
 which are defined by
\begin{align*}
 s_1&:(z:w) \mapsto (z:w:0,\; z:w),\\
 s_2&:(z:w) \mapsto (w:z:0,\; z:w).
\end{align*}
\noindent
 These two sections meet on $S_i$ at
 the two points $(1:1:0,\; 1:1)$ and $(1:-1:0,\; 1:-1)$.
 We see that their lifts $\tilde{s}_k$ on
 the desingularization $\tilde{S}_i$
 (that is, the strict transforms of $s_k(\bP^1)$)
 do not intersect on the exceptional curves at
 $(1:1:0,\; 1:1)$ and $(1:- 1:0,\; 1:- 1)$,
 which means they do not intersect on $\tilde{S}_i$.
 Considering the process of blowing downs
 in \refsubsec{subsec:proof of first lem}
 (see the figures of $\phi^{-1}_i(1: \pm 1)$),
 we know that their images in $T_i$ also do not intersect
 each other.
 Note that $s_1(\bP^1) = V(xw-yz,u)$,
 $s_2(\bP^1) = V(xz-yw,u)$
 and
\begin{align*}
 F_0 &= u^2z(z^2-2w^2) + w(xz-yw)(xw-yz), \\
 F_1 &= u^2(z^4-3z^2w^2+w^4) + zw(xz-yw)(xw-yz), \\
 F_2 &= u^2z(z^2-w^2) + w(xz-yw)(xw-yz).
\end{align*}
 Then we can check that
 in $\Div(\tilde{S}_i)$
$$
 \div(xw-yz) = [V(xw-yz,u)], \quad
 \div(xz-yw) = [V(xz-yw,u)].
$$
 Thus we see that
 $\tilde{s}_k(\bP^1)$ are linearly equivalent.
 Therefore we have
$$
\tilde{s}_1(\bP^1)^2 = \tilde{s}_2(\bP^1)^2 = \tilde{s}_1(\bP^1)\cdot\tilde{s}_2(\bP^1)=0.
$$
\noindent
 The same is true for the images of $\tilde{s}_k(\bP^1)$
 in $T_i$.
\begin{lemma}
 Let $C$ be an irreducible projective curve on
 a Hirzebruch surface $\bF_n$
 which is different from $B$.
 If $n>0$,
 then $[C]^2= 0$ if and only if
 $C$ is a fiber of $\bF_n$.
\end{lemma}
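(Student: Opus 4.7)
The plan is to express $[C]$ in the basis $\{h,f\}$ of $\Pic\bF_n$ provided by \refprop{prop:Hirzebruch surface} and read off $[C]^2$ via the intersection form. First I would record the full intersection table on $\bF_n$: we are given $f^2=0$ and $h^2=n$, and from $b=h-nf$ together with $b\cdot f=1$ (which holds because $B$ is a section of $\bF_n\to\bP^1$ and $f$ is a fiber) we deduce $h\cdot f=1$. Writing $[C]=\alpha h+\beta f$ with $\alpha,\beta\in\bZ$, one then has
\[
  [C]^2=\alpha^2 n+2\alpha\beta,\qquad C\cdot f=\alpha,\qquad C\cdot b=\beta.
\]

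Next I would extract the sign constraints on $\alpha$ and $\beta$ coming from irreducibility. Since $C$ is irreducible and $C\neq B$, the intersection $C\cdot B$ counts (with multiplicities) points of $C\cap B$, so $\beta=C\cdot b\ge 0$. If $C$ is not a fiber, then the projection $\pi\colon\bF_n\to\bP^1$ restricted to $C$ is nonconstant, hence surjective, so $C$ meets every fiber; equivalently, intersecting with any fiber $F$ (no irreducible component of which is $C$) gives $\alpha=C\cdot f=C\cdot F\ge 1$.

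Combining these, under the hypothesis $n>0$, I would conclude
\[
  [C]^2=\alpha(\alpha n+2\beta)\ge 1\cdot(n+0)=n>0,
\]
so $[C]^2\neq 0$ whenever $C$ is not a fiber. Conversely, if $C$ is a fiber then $[C]=f$ and $[C]^2=f^2=0$ by \refprop{prop:Hirzebruch surface}~(1). This establishes the equivalence.

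I do not expect a serious obstacle; the only mildly delicate point is justifying $\alpha=C\cdot f>0$ when $C$ is not a fiber, which requires knowing that $\pi|_C$ is dominant (so that for a generic fiber $F$ the intersection $C\cap F$ is a nonempty finite set counted positively). Once that geometric input is in place, the rest is a one-line computation in the basis $\{h,f\}$.
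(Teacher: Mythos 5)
Your proof is correct and follows essentially the same route as the paper: write $[C]=\alpha h+\beta f$, use irreducibility and $C\neq B$ to get $\alpha=C\cdot f\ge 0$, $\beta=C\cdot b\ge 0$, compute $[C]^2=\alpha^2 n+2\alpha\beta$, and use surjectivity of $\pi|_C$ to force $\alpha>0$ when $C$ is not a fiber. The only cosmetic difference is that you derive $h\cdot f=1$ from $b\cdot f=1$, whereas the paper quotes it directly from Proposition~\ref{prop:Hirzebruch surface}.
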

\begin{proof}
 We use the notation in \refprop{prop:Hirzebruch surface}.
 Put $b = [B]$ and $c = [C]$.
 Since $\Pic \bF_n = \bZ h \oplus \bZ f$,
 there exist $\ga,\gb \in \bZ$ such that
 $c = \ga h + \gb f$.
 First note that $c\cdot f \ge 0$ and $c \cdot b \ge 0$
 since $B \neq C$.
 By \refprop{prop:Hirzebruch surface},
 we know that $f^2 = 0$, $h\cdot f = 1$ and $b = h - n f$.
%
%
 Hence $\ga \ge 0$ and $\gb \ge 0$.
 Since $c^2 = \ga^2n + 2 \ga\gb$,
 we see that $c^2 = 0$
 if and only if $\ga = 0$.
 If $C$ is not a fiber on $\bF_n$,
 then the restriction morphism $C \hr \bF_n \ra \bP^1$
 is surjective.
 This means $c\cdot f > 0$.
 Therefore $C$ is a fiber of $\bF_n$.
\end{proof}
\noindent
%
%
 Remember that the above sections $\tilde{s}_k$ are global sections.
 Hence we conclude that $n=0$,
 namely the geometrically ruled surface $T_i$
 over $\bP^1$ is $\bF_0=\bP^1\times\bP^1$,
 which is already a minimal model of $\tilde{S}_i$.

%
%

\begin{remark}
\label{rem:idealcurve}
 The set $S_i \ssm V(p_i)$ of ideal points of
 the canonical component in $\bP^2\times\bP^1$ consists of
 three `ideal curves' for $i=0, 2$, that is,
 one fiber $\phi^{-1}_i(1:0)$
 and two global sections $s_k(\bP^1)$.
 For $S_1$, there is an additional fiber.
 Namely $S_1 \ssm V(p_1)$ consists of $\phi^{-1}_1(1:0)$,
 $\phi^{-1}_1(0:1)$ and two global sections $s_k(\bP^1)$.
\end{remark}


\section{Desingularization of the models}
\label{sec:Desingularization}

 In this section we determine $\tilde{S}_i$
 in terms of the number of blow ups from their minimal models
 we have computed in \refsec{sec:MinModel}.

 From the result in \refsec{sec:MinModel},
 the smooth surfaces $\tilde{S}_i$ are
 isomorphic to $\bP^1\times\bP^1$ with one point blown up
 some number of times.
 Suppose that $\tilde{S}_i$ is obtained from $\bP^1\times\bP^1$
 by $n$ one-point blow ups.
 Since $\chi(\bP^1\times\bP^1) = 4$
 we have
 $\chi(\tilde{S}_i) = \chi(\bP^1\times\bP^1) + n = n + 4$
 by repeatedly using \reflem{lem:EulerCharSmoothPt}.
 To determine the number $n$ we have to compute
 $\chi(\tilde{S}_i)$.
 This is done by comparing the Euler characteristics
 of $S_i$ and $\tilde{S}_i$.
 For the computation of $\chi(S_i)$
 we follow the Landes' method in \cite{EmilyLandes}, \S$4$.
 Here we introduce a rational map
 $\vphi : \bP^2 \times\bP^1 \dra \bP^1 \times \bP^1$
 defined by
 $(x:y:u,\;z:w) \mapsto (x:y,\;z:w)$
 and set $\vphi_i:= \vphi\!\!\mid_{S_i}$.
 This plays a crucial role for the computation of $\chi(S_i)$
 in this section.

\subsection{Whitehead link case}

 The computation of $\chi(S_0)$ has already been done
 in \cite{EmilyLandes}, \S$4$.
 Here we review the computation for the completeness
 of this note.
 For more details,
 see \opcit  or the $6^2_2$ case.

 Let $\vphi_0 : S_0 \hr \bP^2 \times \bP^1 \dra \bP^1 \times \bP^1$
 be the rational map defined by $(x:y:u,\;z:w) \mapsto (x:y,\;z:w)$.
 This is not defined at the three points
 $(0:0:1,\;0:1)$ and $(0:0:1,\;1:\pm \frac{1}{\sqrt{2}})$.
 Let $P_0$ be the set of those three points and put $U_0:=S_0 \ssm P_0$.
 The image $\Im(\vphi_0)$ of $U_0$ is
 $\bP^1 \times \bP^1 \ssm Q_0$,
 where
\begin{align*}
  Q_0
   &= \bP^1 \times \{(0:1)\} \ssm \left\{(1:0,\;0:1),\; (0:1,\;0:1)\right\} \\
     &\sqcup  \bP^1 \times \left\{\left(1:\tfrac{1}{\sqrt{2}}\right)\right\} \ssm \left\{\left(1:\sqrt{2},\;1:\tfrac{1}{\sqrt{2}}\right),\;\left(1:\tfrac{1}{\sqrt{2}},\;1:\tfrac{1}{\sqrt{2}}\right)\right\} \\
     &\sqcup  \bP^1 \times \left\{\left(1:-\tfrac{1}{\sqrt{2}}\right)\right\} \ssm \left\{\left(1:-\sqrt{2},\;1:-\tfrac{1}{\sqrt{2}}\right),\;\left(1:-\tfrac{1}{\sqrt{2}},\;1:-\tfrac{1}{\sqrt{2}}\right)\right\}.
\end{align*}
\noindent
 Hence $\chi(Q_0)=0$.
 Let $L_0$ be the set of the above six points
\begin{align*}
 \left(1:0,\;0:1\right),\;\left(0:1,\;0:1\right),\;\left(1:\sqrt{2},\;1:\tfrac{1}{\sqrt{2}}\right),\\
\left(1:\tfrac{1}{\sqrt{2}},\;1:\tfrac{1}{\sqrt{2}}\right),\;\left(1:-\sqrt{2},\;1:\tfrac{-1}{\sqrt{2}}\right),\;\left(1:\tfrac{-1}{\sqrt{2}},\;1:\tfrac{-1}{\sqrt{2}}\right).
\end{align*}
%
%
\noindent 
 Let
$$
 F_0=u^2z^3 -xyz^2w + (x^2+y^2-2u^2)zw^2 -xyw^3=G_0 + H_0 u^2
$$
 be the decomposition of $F_0$
 in terms of the power of $u$,
 where
$$
 G_0=-xyz^2w+(x^2+y^2)zw^2-xyw^3, \quad
 H_0=z(z^2-2w^2).
$$
 For $(z:w) \in \bP^1$, we see that
 $H_0(z,w)=0$ if and only if
 $(z:w)=(0:1),\; (1:\pm \frac{1}{\sqrt{2}})$.
 Then we can check that
 the set $\{G_0=H_0=0\} \subset \bP^1\times\bP^1$ is equal to $L_0$.
 Therefore each point of $L_0$ has an infinite fiber isomorphic to
 the affine line $\bA^1$.
 Hence we have $\chi(L_0)=6$ and $\chi(\vphi^{-1}_0(L_0))=6$.
%
 Since $G_0=w(xz-yw)(xw-yz)$,
 the set $B_0:=V(G_0) \subset \bP^1\times\bP^1$ is decomposed
 into the following three subsets
\begin{align*}
 B_{01}&=V(w)=\bP^1 \times \{(1:0)\} \subset \bP^1\times\bP^1, \\
 B_{02}&=V(xz-yw)=\{(1:y,\; y:1),\; (0:1,\; 1:0)\} \;\isom\; \bP^1, \\
 B_{03}&=V(xw-yz)=\{(1:y,\; 1:y),\; (0:1,\; 0:1)\} \;\isom\; \bP^1.
\end{align*}
 Their relations are as follows:
\begin{align*}
 B_{01} \cap B_{02} &=\{(0:1,\; 1:0)\}, \quad B_{01}\cap B_{03}=\{(1:0,\;1:0)\}, \\
 B_{02} \cap B_{03} &= \{(1:1,\; 1:1),\; (1:-1,\; -1:1)\}, \quad  B_{01} \cap B_{02} \cap B_{03} =\emptyset.
\end{align*}
\noindent
 Hence 
\begin{align*}
\chi(B_0)&=\chi(B_{01}\cup B_{02} \cup B_{03})\\
         &=\chi(B_{01}\cup B_{02})+\chi(B_{03})
          -\chi(B_{01}\cap B_{03} \;\cup\; B_{02}\cap B_{03}) \\
         &=\chi(B_{01})+\chi(B_{02})+\chi(B_{03})
         -\chi(B_{01}\cap B_{02})-\chi(B_{01}\cap B_{03})-\chi(B_{02}\cap B_{03})
         + \chi(B_{01}\cap B_{02} \cap B_{03})\\
         &=2+2+2-1-1-2+0=2.
\end{align*}
 Thus we have
\begin{align*}
 \chi(U_0) &= 2 \chi(\bP^1 \times \bP^1 \ssm (B_0 \sqcup Q_0))
            + \chi(B_0 \ssm L_0) + \chi(\vphi^{-1}_0(L_0)) \\
           &= 2\chi(\bP^1 \times \bP^1) -\chi(B_0) -2\chi(Q_0)
            - \chi(L_0) + \chi(\vphi^{-1}_0(L_0))\\
           &= 2\times 4 -2  -6 + 6=6.\\
 \chi(S_0) &= \chi(U_0) + \chi(P_0)=\chi(U_0) + 3=9.
\end{align*}
\noindent
 Thus
 $\chi(\tilde{S}_0)=\chi(S_0\ssm S_{0,\sing}) + 4 \chi(\bP^1)=\chi(S_0)-4+8=13$.
 Therefore $\tilde{S}_0$ is isomorphic to $\bP^1\times\bP^1$
 blown up $13-4=9$ times.

\subsection{$6^2_2$ case}

 Let $\vphi_1 : S_1 \hr \bP^2 \times\bP^1 \dra \bP^1 \times \bP^1$
 be the rational map defined by
 $(x:y:u,\;z:w) \mapsto (x:y,\;z:w)$.
 This is not defined at the following four points
$$
P_1:=\left\{\left(0:0:1,\;1:\pm \tfrac{1 \pm \sqrt{5}}{2}\right)\right\}.
$$
 The image $\Im(\vphi_1)$ of the open subset
 $U_1:=S_1\ssm P_1$ is $\bP^1 \times \bP^1 \ssm Q_1$,
 where
\begin{align*}
 Q_1 &= \bP^1 \times \left\{\left(1:\tfrac{1+\sqrt{5}}{2}\right)\right\} \ssm \left\{\left(1:\tfrac{\pm 1+\sqrt{5}}{2},\;1:\tfrac{1+\sqrt{5}}{2}\right)\right\} \\
     &\sqcup \bP^1 \times \left\{\left(1:\tfrac{1-\sqrt{5}}{2}\right)\right\} \ssm \left\{\left(1:\tfrac{\pm 1-\sqrt{5}}{2},\;1:\tfrac{1-\sqrt{5}}{2}\right)\right\} \\
     &\sqcup \bP^1 \times \left\{\left(1:\tfrac{-1+\sqrt{5}}{2}\right)\right\} \ssm \left\{\left(1:\tfrac{\pm 1 + \sqrt{5}}{2},\;1:\tfrac{-1+\sqrt{5}}{2}\right)\right\} \\
     &\sqcup \bP^1 \times \left\{\left(1:\tfrac{-1-\sqrt{5}}{2}\right)\right\} \ssm \left\{\left(1:\tfrac{\pm 1 - \sqrt{5}}{2},\;1:\tfrac{-1-\sqrt{5}}{2}\right)\right\}. 
\end{align*}
\noindent
 Let $L_1$ be the subset of $\bP^1 \times \bP^1$ which
 consists of the eight points
$$
\left(1:\tfrac{\pm 1 +\sqrt{5}}{2},\;1:\tfrac{1+\sqrt{5}}{2}\right),\; \left(1:\tfrac{\pm 1 -\sqrt{5}}{2},\;1:\tfrac{1-\sqrt{5}}{2}\right),\; 
\left(1:\tfrac{\pm 1 +\sqrt{5}}{2},\;1:\tfrac{-1+\sqrt{5}}{2}\right), \; \left(1:\tfrac{\pm 1 -\sqrt{5}}{2},\;1:\tfrac{-1-\sqrt{5}}{2}\right).
$$
 Let
$$
F_1:=u^2z^4 - xyz^3w + (x^2+y^2-3u^2)z^2w^2 -xyzw^3 +u^2w^4=G_1 + H_1 u^2
$$
 be the decomposition of $F_1$
 in terms of the power of $u$,
 where
$$
 G_1=-xyz^3w+(x^2+y^2)z^2w^2-xyzw^3, \quad 
 H_1=z^4-3z^2w^2+w^4.
$$
\noindent
 Then the image $\Im(\vphi_1)$ of $\vphi_1$ is decomposed into
 three subsets
$$
 \Im(\vphi_1)=\vphi_1(U_1)=\{G_1=H_1=0\} \sqcup \{G_1=0,\;H_1\neq 0\}\sqcup \{G_1\neq 0,\;H_1\neq 0\}.
$$
 \noindent
 We can characterize these three subsets as follows:
 For any point $(x:y,\;z:w) \in \Im(\vphi_1)$,
 the fiber of $\vphi_1$ at $(x:y,\;z:w)$ is an infinite set
 if and only if
 $G_1(x,y,z,w)=H_1(x,y,z,w)=0$;
 the fiber of $\vphi_1$ at $(x:y,\;z:w)$ consists of one point
 if and only if
 $G_1(x,y,z,w)= 0$ and $H_1(x,y,z,w)\neq 0$;
 the fiber of $\vphi_1$ at $(x:y,\;z:w)$ consists of two points
 if and only if
 $G_1(x,y,z,w)\neq 0$ and $H_1(x,y,z,w)\neq 0$.
 For $(z:w) \in \bP^1$, we see that
 $H_1(z,w)=0$ if and only if
 $(z:w)=\left(1:\pm \frac{1 \pm \sqrt{5}}{2}\right)$.
 Then it is easy to see that $L_1$ is equal to
 the set of points satisfying
 $G_1(x,y,z,w)=H_1(x,y,z,w)=0$.
 This means each point of $L_1$ has an infinite fiber which is
 isomorphic to the affine line $\bA^1$.
\noindent 
 Hence $\chi(L_1)=8$ and $\chi(\vphi^{-1}_1(L_1))=8$.
 Since $G_1=zw(xz-yw)(xw-yz)$,
 the set $B_1:=V(g_1) \subset \bP^1\times\bP^1$ is decomposed
 into $4$ subsets
\begin{align*}
 B_{11}&=V(z)=\bP^1 \times \{(0:1)\} \subset \bP^1\times\bP^1, \\
 B_{12}&=V(w)=\bP^1 \times \{(1:0)\}, \\
 B_{13}&=V(xz-yw)=\{(1:y,\; y:1),\;(0:1,\; 1:0)\} \;\isom\; \bP^1, \\
 B_{14}&=V(xw-yz)=\{(1:y,\; 1:y),\;(0:1,\; 0:1)\} \;\isom\; \bP^1.
\end{align*}
 Immediately we have
\begin{align*}
 B_{11} \cap B_{12} &=\emptyset,
 \quad B_{11}\cap B_{13}=\{(1:0,\; 0:1)\},
 \quad B_{11} \cap B_{14} = \{(0:1,\; 0:1)\}, \\
 B_{12} \cap B_{13} &= \{(0:1,\; 1:0)\},
 \quad B_{12} \cap B_{14} =\{(1:0,\;1:0)\}, \\
 B_{13} \cap B_{14} &=\{(1:1,\; 1:1),\; (1:-1,\; -1:1)\},\\
 B_{11} \cap B_{12} \cap B_{13} &=B_{11} \cap B_{12} \cap B_{14}=B_{11}\cap B_{13}\cap B_{14}=\emptyset, \\
 B_{12} \cap B_{13} \cap B_{14} &=\emptyset,
 \quad B_{11}\cap B_{12} \cap B_{13} \cap B_{14} = \emptyset.
\end{align*}
\noindent
 Hence we can compute the Euler characteristic $\chi(B_1)$:
\begin{align*}
\chi(B_1)&=\chi(B_{11}\cup B_{12} \cup B_{13} \cup B_{14})\\
         &=\chi(B_{11}) + \chi(B_{12}\cup B_{13} \cup B_{14})
          -\chi((B_{11}\cap B_{12}) \;\cup\; (B_{11}\cap B_{13}) \;\cup\; (B_{11}\cap B_{14})) \\
         &=\chi(B_{11})+\chi(B_{12})+\chi(B_{13})+\chi(B_{14})
         -\chi(B_{11}\cap B_{12})-\chi(B_{11}\cap B_{13})-\chi(B_{11}\cap B_{14}) \\
         &\text{  }-\chi(B_{12}\cap B_{13})-\chi(B_{12}\cap B_{14})-\chi(B_{13}\cap B_{14}) \\
         &=2+2+2+2-0-1-1-1-1-2=2.
\end{align*}
\noindent
 Thus we have
\begin{align*}
 \chi(U_1) &= 2 \chi((\bP^1 \times \bP^1) \ssm (B_1 \sqcup Q_1))
            + \chi(B_1 \ssm L_1) + \chi(\vphi^{-1}_1(L_1)) \\
           &= 2\chi(\bP^1 \times \bP^1) -\chi(B_1) -2\chi(Q_1)
            - \chi(L_1) + \chi(\vphi^{-1}_1(L_1))\\
           &= (2\times 4) -2 -(2\times 0) -8 + 8= 6.\\
 \chi(S_1) &= \chi(U_1) + \chi(P_1)=\chi(U_1) + 4 = 10.
\end{align*}
\noindent
 Let $\tilde{S}_1$ be the desingularization of $S_1$
 by blowing up at six singular points.
 Each fiber is a smooth conic curve inside $\tilde{S}_1$, which is isomorphic to $\bP^1$.
 Thus we have
$$
\chi(\tilde{S}_1)= \chi(S_1 \ssm S_{1,\sing}) + 6 \chi(\bP^1)
= 10 -6 + 12 =16.
$$
\noindent 
 Remember that
 $\tilde{S}_1$ is isomorphic to
 $\bP^1\times\bP^1$ blown up $n$ times.
 That means $\chi(\tilde{S}_1)=\chi(\bP^1\times\bP^1) + n=n+4$,
 which implies $n= 12$.

\subsection{$6^2_3$ case}

 We can compute $\chi(S_2)$ by the same way as in the $6^2_2$ case,
 therefore we omit the details.
 Let $\vphi_2 : S_2 \hr \bP^2 \times\bP^1 \dra \bP^1 \times \bP^1$
 be the rational map defined by $(x:y:u,\;z:w) \mapsto (x:y,\;z:w)$.
 This is not defined at three points $(0:0:1,\;0:1)$, $(0:0:1,\;1:1)$
 and $(0:0:1,\;1:-1)$.
 Let $P_2$ be the set of those three points and put $U_2:=S_2 \ssm P_2$.
 The image $\Im(\vphi_2)$ of $U_2$ is
 $\bP^1 \times \bP^1 \ssm Q_2$,
 where
\begin{align*}
  Q_2
   &= \bP^1 \times \{(0:1)\} \ssm \left\{(1:0,\;0:1),\; (0:1,\;0:1)\right\} \\
     &\sqcup  \bP^1 \times \{(1:1)\} \ssm \left\{(1:1,\;1:1)\right\} \\
     &\sqcup  \bP^1 \times \{(1:-1)\} \ssm \left\{(1:-1,\;1:-1)\right\}.
\end{align*}
\noindent 
 Hence $\chi(Q_2)=2$.
 Let
$$
F_2=u^2z^3-xyz^2w+(x^2+y^2-u^2)zw^2-xyw^3=G_2 + H_2 u^2
$$
 be the decomposition of $F_2$
 in terms of the power of $u$,
 where
$$
 G_2=-xyz^2w+(x^2+y^2)zw^2-xyw^3, \quad
 H_2=z(z^2-w^2).
$$
 For $(z:w) \in \bP^1$, it is easy to check that
 $H_2(z,w)=0$ if and only if
 $(z:w)=(0:1),\; (1:1)$ or $(1:-1)$.
 Let $L_2$ be the subset of $\bP^1\times \bP^1$ which consists of
 the following four points
$$
 \left(1:0,\;0:1\right),\; \left(0:1,\;0:1\right),\; \left(1:1,\;1:1\right),\;\left(1:-1,\;1:-1\right).
$$
 Then we see that $L_2=\{G_2=H_2=0\}$
 as in the $6^2_2$ case.
 Therefore each point of $L_2$ has an infinite fiber isomorphic to
 the affine line $\bA^1$.
 Hence we have $\chi(L_2)=4$ and $\chi(\vphi^{-1}_2(L_2))=4$.
%
\noindent 
%
%
 Since $G_2=w(xz-yw)(xw-yz)$,
 the set $B_2:=V(g_2) \subset \bP^1\times\bP^1$ is decomposed
 into the following three subsets
\begin{align*}
 B_{21}&=V(w)=\bP^1 \times \{(1:0)\} \subset \bP^1\times\bP^1, \\
 B_{22}&=V(xz-yw)=\{(1:y,\; y:1),\;(0:1,\; 1:0)\} \;\isom\; \bP^1, \\
 B_{23}&=V(xw-yz)=\{(1:y,\; 1:y),\;(0:1,\; 0:1)\} \;\isom\; \bP^1, \\
 B_{21} \cap B_{22} &=\{(0:1,\; 1:0)\}, \quad B_{21}\cap B_{23}=\{(1:0,\;1:0)\}, \\
 B_{22} \cap B_{23} &= \{(1:1,\; 1:1),\; (1:-1,\; -1:1)\}, \quad  B_{21} \cap B_{22} \cap B_{23} =\emptyset.
\end{align*}
\noindent
 Hence we have
\begin{align*}
\chi(B_2)&=\chi(B_{21}\cup B_{22} \cup B_{23})\\
         &=\chi(B_{21}\cup B_{22})+\chi(B_{23})
          -\chi(B_{21}\cap B_{23} \;\cup\; B_{22}\cap B_{23}) \\
         &=\chi(B_{21})+\chi(B_{22})+\chi(B_{23})
         -\chi(B_{21}\cap B_{22})-\chi(B_{21}\cap B_{23})-\chi(B_{22}\cap B_{23})
         + \chi(B_{21}\cap B_{22} \cap B_{23})\\
         &=2+2+2-1-1-2+0=2.
\end{align*}
\noindent
 Thus we can compute $\chi(S_2)$ as follows.
\begin{align*}
 \chi(U_2) &= 2 \chi(\bP^1 \times \bP^1 \ssm (B_2 \sqcup Q_2))
            + \chi(B_2 \ssm L_2) + \chi(\vphi^{-1}_2(L_2)) \\
           &= 2\chi(\bP^1 \times \bP^1) -\chi(B_2) -2\chi(Q_2)
            - \chi(L_2) + \chi(\vphi^{-1}_2(L_2))\\
           &= 2\times 4 -2 -(2\times 2) - 4 + 4=2.\\
 \chi(S_2) &= \chi(U_2) + \chi(P_2)=\chi(U_2) + 3=5.
\end{align*}

\noindent 
 We have already seen in \refsec{sec:ProjModel} that
 $\chi(\tilde{S}_2) = \chi(S_2) + 8 = 5 + 8 = 13$.
 Hence $\tilde{S}_2$ is isomorphic to $\bP^1\times\bP^1$
 blown up $13-4= 9$ times.



\begin{thebibliography}{10}

\bibitem{Beauville}
A.\ Beauville, \emph{Complex algebraic surfaces}, second ed., London
  Math.\ Soc.\ Student Texts, \textbf{34}, Cambridge Univ.\ Press,
 1996.

\bibitem{Burde}
G.\ Burde and H.\ Zieschang, \emph{Knots}, second ed., de Gruyter
  Studies in Math. \textbf{5}.

\bibitem{CS}
M.\ Culler and P.\ Shalen, \emph{Varieties of group representations and
  splittings of {$3$}-manifolds}, Ann.\ Math.\ (2) \textbf{117} (1983), 109--146.

\bibitem{GMM2}
F.~W. Gehring, C.~Maclachlan, and G.~J. Martin, \emph{Two-generator arithmetic
  {K}leinian groups. {II}}, Bull.\ London Math.\ Soc.\ \textbf{30} (1998), 258--266.


\bibitem{GM}
F.~Gonz{\'a}lez-Acu{\~n}a and Jos{\'e}~Mar{\'{\i}}a Montesinos-Amilibia,
  \emph{On the character variety of group representations in {${\rm SL}(2,{\bf
  C})$} and {${\rm PSL}(2,{\bf C})$}}, Math.\ Z.\ \textbf{214} (1993), 627--652.

%
%

\bibitem{Hartshorne}
R.\ Hartshorne, \emph{Algebraic geometry}, Springer-Verlag, 1977,
  Grad.\ Texts in Math., \textbf{52}.

\bibitem{EmilyLandes}
E.\ Landes, \emph{Identifying the canonical component for the {W}hitehead
  link}, Math.\ Res.\ Lett.\ \textbf{18} (2011), 715--731.

\bibitem{W-Q}
W.\ Li and Q.\ Wang, \emph{An {${\rm SL}_2(\Bbb C)$} algebro-geometric
  invariant of knots}, International J.\ Math.\ \textbf{22} (2011),  1209--1230.


\bibitem{MPL}
M.\ Macasieb, K.\ Petersen, and R.\ van Luijk, \emph{On
  character varieties of two-bridge knot groups}, Proc.\ London Math.\ Soc.\ \textbf{103} (2011), 473--507.

\bibitem{BookMaRe}
C.\ Maclachlan and A.~W.\ Reid, \emph{The arithmetic of hyperbolic
  3-manifolds}, Grad.\ Texts in Math., \textbf{219}, Springer-Verlag, New
  York, 2003.

\bibitem{Vicente}
V.\ Mu{\~n}oz, \emph{The {${\rm SL}(2,\Bbb C)$}-character varieties of
  torus knots}, Rev.\ Mat.\ Complut.\ \textbf{22} (2009), 489--497.

\bibitem{RileyNonab}
R.\ Riley, \emph{Nonabelian representations of {$2$}-bridge knot groups},
  Quart.\ J.\ Math.\ Oxford Ser.\ (2) \textbf{35} (1984), 191--208.


\bibitem{ShalenHandbook}
P.\ Shalen, \emph{Representations of 3-manifold groups}, Handbook of
  geometric topology, North-Holland, Amsterdam, 2002, 955--1044.

\bibitem{AdvancedSilverman}
J.\ H.\ Silverman, \emph{Advanced topics in the arithmetic of elliptic
  curves}, Grad.\ Texts in Math., \textbf{151}, Springer-Verlag, New York,
  1994.


\end{thebibliography}

\providecommand{\bysame}{\leavevmode\hbox to3em{\hrulefill}\thinspace}
\providecommand{\MR}{\relax\ifhmode\unskip\space\fi MR }
\providecommand{\MRhref}[2]{%
  \href{http://www.ams.org/mathscinet-getitem?mr=#1}{#2}
}
\providecommand{\href}[2]{#2}


\vspace{0.5cm}

\noindent
Shinya Harada\\
School of Mathematics\\
Korea Institute for Advanced Study (KIAS)\\
Hoegiro 87,
 Dongdaemun-gu, Seoul 130-722\\
 Republic of Korea\\
{\tt harada@kias.re.kr}

\end{document}